\tikzstyle{fleche}=[->,>=latex,thick]
\newcommand{\pk}{\mathcal{P}^k(\mathcal{D})}
\newcommand{\D}{\mathcal{D}}
\newcommand{\cost}{\mathrm{\operatorname{cost}}}
\newcommand{\comp}{\mathrm{comp}}
\newcommand{\pr}{\mathrm{\texttt{pr}}}
\newcommand{\ReLU}{\mathrm{ReLU}}
\newcommand{\conv}{\mathrm{conv}}
\newcommand{\QQ}{\mathbb{Q}}
\newcommand{\diff}{\mathrm{diff}}
\newcommand{\backprop}{\mathrm{backprop}}
\newcommand{\forprop}{\mathrm{forprop}}
\newcommand{\sign}{\mathrm{sign}}
\newcommand{\RR}{\mathbb{R}}
\newcommand{\NN}{\mathbb{N}}
\newcommand{\partialc}{\partial^c}
\newtheorem{theorem}{Theorem}
\newtheorem{lemma}{Lemma}
\newtheorem{proposition}{Proposition}
\newtheorem{corollary}{Corollary}
\newtheorem{definition}{Definition}
\newtheorem{assumption}{Assumption}
\newtheorem{remark}{Remark}
\newtheorem{claim}{Claim}
\newtheorem{problem}{Problem}
\newtheorem{example}{Example}
\newtheorem{case}{Case}
\newenvironment{proof}[1][]{\noindent {\bf Proof #1:\;}}{\hfill $\Box$}
\newcommand{\R}{\mathbb{R}}
\title{On the complexity of nonsmooth automatic differentiation}
\begin{document}

\author{Jérôme Bolte\textsuperscript{1,2}, Ryan Boustany\textsuperscript{1,2,4},  Edouard Pauwels \textsuperscript{2,3} \& Béatrice Pesquet-Popescu \textsuperscript{4} \\
\textsuperscript{1} Toulouse School of Economics \\
\textsuperscript{2} Université de Toulouse \\
\textsuperscript{3} IRIT, CNRS. Institut Universitaire de France (IUF). \\
\textsuperscript{4} Thales LAS France \\
\texttt{\{{jerome.bolte, ryan.boustany\}@ut-capitole.fr}, \texttt{edouard.pauwels@irit.fr}}, \\ \texttt{beatrice.pesquet@thalesgroup.com}
}

\date{\today}

%\tableofcontents
\newpage 

\etocdepthtag.toc{mtsection}
\etocsettagdepth{mtsection}{subsection}
\etocsettagdepth{mtappendix}{none}

\maketitle

\begin{abstract}
Using the notion of conservative gradient, we provide a simple model to estimate the computational costs of the backward and forward modes of algorithmic differentiation for a wide class of nonsmooth programs. The overhead complexity of the backward mode turns out to be independent of the dimension when using programs with locally Lipschitz semi-algebraic or definable elementary functions. This considerably extends Baur-Strassen's smooth cheap gradient principle. We illustrate our results by establishing fast backpropagation results of conservative gradients through feedforward neural networks with standard activation and  loss functions.
Nonsmooth backpropagation's cheapness contrasts with concurrent forward approaches, which have, to this day, dimensional-dependent worst-case  overhead estimates. We provide further results suggesting the superiority of backward propagation of conservative gradients. Indeed, we relate the complexity of computing a large number of directional derivatives to that of matrix multiplication, and we show that finding two subgradients in the Clarke subdifferential of a function is an NP-hard problem.

\end{abstract}

\section{Introduction}
\label{sec:introduction}

\paragraph{Automatic evaluation of derivatives:} 
Algorithmic differentiation (AD) appeared around 60 years ago (\cite{Beda1959Programs,wengert1964simple}), and has been since then constantly developed and used in many contexts, see \cite{griewank1989automatic,griewank2008evaluating} for a thorough discussion. Today, it is at the core of modern learning architectures \citep{Rumelhart:1986we,lecun2015deep,baydin2018automatic}, to the point that training a neural network (NN) is ultimately a way to combine the outputs of AD. There are many practical and theoretical developments available nowadays: flexible and efficient numerical libraries   \citep{45381,NEURIPS2019_9015,jax2018github}, an implicit differentiation theory \citep{griewank2003piggyback,griewank2008evaluating} and its extensions \citep{agrawal2019differentiating,bai2019deep,bolte2021nonsmooth,blondel2021efficient}, the adjoint method \citep{farrell2013automated,pearlmutter1995gradient,plessix2006review} with application to neural ODEs \citep{chen2018neural}, ``piggyback'' style differentiation of optimization algorithms \citep{griewank2003piggyback,mehmood2020automatic, bertrand2020implicit,lorraine2020optimizing}, or differentiation of conjugate gradient algorithms \citep{gratton2014}.

Backward algorithmic differentiation, or backpropagation, plays a particular role when smooth optimization tasks are at stake, as it evaluates the gradient of a function with a cost proportional to that of function evaluations, independently of dimension. This property, called {\em the cheap gradient principle} \citep{wolfe1982checking,griewank2008evaluating}, is at the root of the machine learning libraries revolution. According to the key complexity theory version of this result due to 
\cite{bauerstrassen1983}, arithmetic complexity of the evaluation of the derivative of a rational function is at most $5$ times the complexity of function evaluation. Extensions exist for smooth differentiable functions \cite{bauerstrassen1983,griewank2008evaluating} but standard computational practice of AD consists of little known about the nonsmooth case.

The objective of this paper is precisely to present a simple, general, nonsmooth cheap conservative principle and to explore other complexity results for evaluating nonsmooth derivatives. This extends the cheap gradient principle of smooth AD to the path differentiable world \cite{bolte2020mathematical} which includes semi-algebraic and more generally definable functions \cite{coste2000introduction,coste2002introduction}, a class that contains the vast majority of machine learning programs used in practice, see for example \cite{bolte2020mathematical}.
\paragraph{Nonsmooth AD \& computational complexity:}  Sorting values, pooling data, thresholding functions, or determining closest points are some of the most essential numerical decision operations. They are ubiquitous in machine learning and modern optimization. All of them are nonsmooth, and most of them have a very desirable feature: they are cheap to compute, much cheaper than smoothed surrogates. For instance, the famous ReLU activation in deep learning, whose role is to threshold to zero negative values to allow for the inactivity of neurons, requires only one bit of encoding in theory. On the other hand, other nonlinear activations potentially require auxiliary algorithms for their evaluation, incurring a higher computational cost.
This simplicity of use also comes with the issue of finding an adequate way of training models and, thus differentiating objects.

The standard computational practice of AD consists in applying differential calculus rules directly to nonsmooth objects, replacing gradients by surrogates, typically Clarke subgradients. This is how AD is performed within TensorFlow, PyTorch or Jax.  This approach has shown tremendous success \citep{lecun2015deep} and has been massively used for the last 10 years. 
Yet, despite this empirical success, Barton et al. claimed in \cite{barton2018computationally} that  ``\emph{there does not seem to exist [{\rm at this day}] a true analogous reverse AD mode to compute generalized derivatives for nonsmooth functions}'', illustrating the difficulty of nonsmooth AD.
Conservative gradients were introduced as a faithful mathematical model capturing the formal application of calculus rules to subdifferentials by \cite{bolte2020conservative,bolte2020mathematical,bolte2021nonsmooth}. The author unfamiliar with this notion may reduce, in a ML context, conservative gradients to outputs of calculus rules formally applied to Clarke subgradients and Jacobians. Our goal is to provide an adequate computational complexity theory for conservative calculus, a theory that will therefore match standard practical approaches.

Among other possible first-order options offered by nonsmooth calculus, we also investigate the properties of directional derivatives and those of the Clarke subdifferential. For directional derivatives, our motivation comes from the fact that this nonsmooth operation has general calculus rules, while the Clarke subdifferential is central in terms of variational interpretation.

\paragraph{Contributions:} The main thesis of this work is that conservative gradients have computational properties similar to smooth derivatives, which are much more favorable than those of alternative nonsmooth oracles such as subgradients or directional derivatives. 

\textbullet\ \ We provide a simple computational model for addressing the question of complexity theory of nonsmooth numerical programs.\\
\textbullet\ \ For the backward mode, we prove a {\em cheap conservative gradient principle} à la Baur-Strassen, generalizing state of the art to nonsmooth programs modeling most NNs. We establish that, regardless of dimension, the computational cost of a conservative gradient is of the order of that of function evaluation. 
Our results provide a theoretical validation of the fact that the cost of backpropagation does not depend on the programs' smoothness.\\
\textbullet\ \ For the forward mode, we relate the computational cost of $p$ directional derivatives to that of $p\times p$ matrix multiplication. We provide lower complexity bounds that illustrate the limits to which this deficiency may be improved. This applies to existing nonsmooth AD frameworks \citep{khan2012evaluating,khan2013evaluating}.\\
\textbullet\ \ We establish that computing two distinct elements in the Clarke subdifferential of a given point is NP-hard for simple ReLU programs. This result also applies to the lexicographic subdifferential. In contrast, we show that the problem can be solved in polynomial time for conservative gradients. This reflects the computational difficulty of dealing with the Clarke subdifferential.\\
\textbullet\ \ A result of independent interest: deciding differentiability of a ReLU program at a point is NP-hard.

\paragraph{Relation with existing work:} Conservative gradients were introduced in \cite{bolte2020conservative,bolte2020mathematical} to model ``formal subdifferentiation" used by practitioners and nonsmooth ``backpropagation". They were further studied in \cite{lewis2021structure,davis2021conservative,bolte2021nonsmooth} and empirically investigated in \cite{bertoin2021numerical}. Computational complexity was only qualitatively considered. We provide a rigorous description of this aspect based an arithmetic computational cost framework capturing programming with nondifferentiable components.
The quest for a computationally cheap nonsmooth derivative has a long history in AD literature. Existing works of Griewank \citep{griewank2008evaluating,griewank2013stable,griewank2019treating,griewank2020beyond} are essentially based on piecewise smoothness structures \citep{scholtes2012introduction}. A cheap subgradient principle was also given in \cite{kakade2018provably}, but it requires a very strong qualification condition. As illustrated in \cite{griewank2019treating}, such qualification conditions can be computationally hard to check in practice.

In another research line,  based on chain rules for directional derivatives, Khan-Barton \citep{khan2012evaluating,khan2013evaluating,khan2015vector,barton2018computationally} studied the vector forward mode AD. In particular, they investigated the forward AD framework to evaluate elements of the lexicographic subdifferential (see \cite{nesterov2005lexicographic}), which is contained in the Clarke subdifferential.  In the worst case, the computational overhead ratio they obtain is proportional to the ambient dimension. This contrasts with our cheap gradient principle, whose constant is dimension-less. 
While these contributions are most relevant to nonsmooth AD, their applicability to large-scale learning models is limited, due to the central role of forward AD.

\paragraph{Organization of the paper:}
We introduce elements of nonsmooth analysis and, in particular, the notion of conservative gradient used throughout this work in Section \ref{sec:nonsmoothoracles}.
Section \ref{sec:ADcomplexity} describes a general model of computation that allows one to express the computational cost and complexity of programs, functions and their conservative gradients. This section also presents an abstract program algorithmic differentiation framework.
These elements are gathered in Section \ref{sec:conservativecomplexity} which presents our extension of the Baur-Strassen result with the cheap conservative gradient principle and its illustrations. To conclude, in Section \ref{sec:nonsmoothAutodiffHardness}, we describe computational lower bounds for evaluating directional derivatives and distinct subgradients for simple programs. 

\section{Nonsmooth generalized gradients}
\label{sec:nonsmoothoracles}

They are fundamental to expressing variations of nonsmooth losses in Machine Learning. Given a locally Lipschitz continuous function $F:\RR^p\to \RR$, the {\em Clarke subdifferential} of $F$ is
\begin{align}
    \partialc F(x) = \operatorname{conv} \left\{ \lim_{k \to +\infty} \nabla F(x_k) : x_k \in \diff_F, x_k \underset{k \to +\infty}{\xrightarrow[]{}} x\right\}
    \label{eq:clarkedefinition}
\end{align}
where $\diff_F$ is the full measure set where $F$ is differentiable and $\nabla F$ is the standard gradient \citep{clarke1983optimization}. The subdifferential is set-valued,  which we write $\partialc F \colon \RR^p \rightrightarrows \RR^p$. For each $x \in \RR^p$, elements of $\partialc F(x)$ are called {\em Clarke subgradients} of $F$. A selection $d$ in $\partialc F$, is a function $d \colon \RR^p \to \RR^p$ such that for all $x \in \RR^p$, $d(x) \in \partialc F(x)$. If $F$ is $C^1$ then $\partialc F = \{\nabla F\}$ everywhere so the only possible selection is $d = \nabla F$. We will manipulate derived dictionaries, which typically provide a selection in either the Clarke subdifferential, or more general set-valued maps.
\begin{example}
\label{ex:introReLU}{\rm For $\ReLU \colon t \mapsto \max(0,t)$, we have  $\partialc \ReLU(t)$ is $\{0\}$ if $t < 0$, $\{1\}$ if $t> 0$ and $[0,1]$ if $t = 0$. We may define the function $\ReLU'$ as a selection in $\partialc \ReLU$ : 
\begin{align*}
    \ReLU'(t) = 1,\text { if } t > 0,\qquad \ReLU'(t) = 0,  \text { otherwise}.
\end{align*}}
\end{example}

The chain-rule, essential to AD, generally fails for Clarke subgradients. This is why we now consider the more flexible notion of conservative gradients.

\begin{definition}[Conservative gradient]  
\label{def:conservativeGradient}{\rm
Let $F\colon \RR^p \to \RR$ be a locally Lipschitz continuous function and $D_F \colon \RR^p \rightrightarrows \RR^p$ a locally bounded, nonempty and graph closed set-valued map. Then $D_F$ is a {\em conservative gradient} for $F$, if for any absolutely continuous curve $\gamma \colon [0,1] \to \RR^p$, 
				\begin{align}
								\frac{d}{d t} F(\gamma(t)) = \left\langle v, \dot{\gamma}(t) \right\rangle\qquad \forall v \in D_F(\gamma(t)), \qquad \text{ for almost all } t \in [0,1].
								\label{eq:chainRule}
				\end{align}
In this case, $F$ is called {\em path differentiable}. Conservative Jacobians are defined similarly. As in Section \ref{sec:nonsmoothoracles}, $d \colon \RR^p \to \RR^p$ is a selection of $D_F$ if $d(x) \in D_F(x)$ for all $x \in \RR^p$.}
\end{definition}
A rich class of path differentiable functions is given by locally Lipschitz continuous semi-algebraic functions with the Clarke subdifferential as a conservative gradient. Actually, virtually all functions used in machine learning are path differentiable  \citep{bolte2020conservative,bolte2020mathematical}. 
The most salient facts about path differentiable functions and their conservative gradients are:

\textbullet\ (\textbf{Clarke subgradient}), for all $x \in \RR^p$, $\partialc F(x) \subset \conv(D_F(x))$.\\
\textbullet\ (\textbf{Gradient almost everywhere}) Conservative gradients are gradients a.e  \citep{bolte2020conservative}.\\
\textbullet\ (\textbf{First-order oracle}) Selection in conservative gradients can be used as surrogate gradients, while preserving convergence guaranties \citep{bolte2020conservative,bolte2020mathematical,bolte2021nonsmooth}.

Conservative Jacobians can be composed while preserving conservativity \citep{bolte2020conservative}, a feature which do not enjoy Clarke Jacobians: let $F \colon \RR^p \to \RR^m$, $G \colon \RR^m \to \RR^l$ be locally Lipschitz continuous mappings, $d_F \colon \RR^p \to \RR^{m \times p}$ and $d_G \colon \RR^m \to \RR^{ l \times m}$ be selections in conservative Jacobians for $F$ and $G$ respectively. Then the product mapping $x \mapsto d_G(F(x)) \times d_F(x)$ is a selection in a conservative Jacobian for $G \circ F$.		The use of conservative Jacobians provides a very convenient framework to model AD in the nonsmooth case, see \cite{bolte2020conservative,bolte2020mathematical}.

A fundamental theorem is the following:
\begin{theorem}[Path differentiable functions are ubiquitous] {\rm  (\cite{bolte2020conservative})}
                Locally Lipchitz semialgebraic (or definable) functions are path differentiable.
\end{theorem}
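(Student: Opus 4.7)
The plan is to exhibit the Clarke subdifferential $\partial^c F$ as a conservative gradient for any locally Lipschitz definable $F$, by exploiting definable stratifications and a tangency lemma for absolutely continuous curves. To begin, I would invoke the Whitney $C^1$-stratification theorem for o-minimal structures: there exists a finite partition of $\RR^p$ (locally, or on any bounded set) into definable $C^1$-manifolds $\{M_i\}_{i=1}^N$, called strata, such that $F$ restricted to each $M_i$ is $C^1$, and any two strata satisfy the Whitney (a)-condition (sequences of tangent spaces along one stratum converging to a point of a lower-dimensional adjacent stratum converge to spaces containing the tangent space of the latter).

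Given such a stratification, I would fix an absolutely continuous curve $\gamma \colon [0,1] \to \RR^p$ and decompose $[0,1] = \bigsqcup_i T_i$ where $T_i = \{ t : \gamma(t) \in M_i \}$. The central technical step is the following tangency lemma: for almost every $t \in T_i$ that is a density point of $T_i$ and at which $\dot\gamma(t)$ exists, the velocity $\dot\gamma(t)$ belongs to the tangent space $T_{\gamma(t)} M_i$. This is classical and follows from measure-theoretic arguments comparing the curve with its projection onto $M_i$ near density points; it is the main obstacle of the proof and requires some care, though for semi-algebraic strata one can use the available local Lipschitz retractions onto $M_i$.

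Granted the tangency property, on $T_i$ we have, for almost every $t$,
\begin{equation*}
\frac{d}{dt} F(\gamma(t)) = \bigl\langle \nabla_{M_i} F(\gamma(t)),\, \dot\gamma(t) \bigr\rangle,
\end{equation*}
where $\nabla_{M_i} F$ is the Riemannian gradient of $F\!\restriction_{M_i}$. I would then show that for any $v \in \partial^c F(\gamma(t))$, the projection of $v$ onto $T_{\gamma(t)} M_i$ equals $\nabla_{M_i} F(\gamma(t))$. This follows from the Whitney (a)-condition together with the characterization \eqref{eq:clarkedefinition}: any limiting gradient of $F$ at $\gamma(t) \in M_i$ (obtained from nearby differentiability points possibly lying in higher-dimensional strata) projects, by stratified smoothness and Whitney regularity, onto $\nabla_{M_i} F(\gamma(t))$; taking convex hulls preserves this projection. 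Combined with the tangency of $\dot\gamma(t)$, we obtain $\langle v, \dot\gamma(t) \rangle = \langle \nabla_{M_i} F(\gamma(t)), \dot\gamma(t) \rangle$ for every $v \in \partial^c F(\gamma(t))$, almost everywhere on $T_i$.

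Summing over the finitely many $T_i$ yields the chain rule \eqref{eq:chainRule} for $D_F := \partial^c F$ along $\gamma$. Local boundedness, nonemptiness, and graph closedness of $\partial^c F$ are standard for locally Lipschitz $F$ \citep{clarke1983optimization}, so $\partial^c F$ is a conservative gradient and $F$ is path differentiable. The definable case is identical, as Whitney stratifications are available in any o-minimal structure; the main obstacle remains the tangency lemma, which is where the semi-algebraic/definable geometry is essential to rule out pathological interactions between $\gamma$ and lower-dimensional strata.
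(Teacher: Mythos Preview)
The paper does not provide its own proof of this statement; it is quoted from \cite{bolte2020conservative} as a known result and left unproved. Your outline is correct and faithfully reconstructs the argument from that reference: a definable Whitney $C^1$-stratification compatible with $F$, the projection formula for Clarke subgradients onto strata (originally Bolte--Daniilidis--Lewis--Shiota), and the a.e.\ tangency of $\dot\gamma$ to the active stratum at density points, combined exactly as you describe.
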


\section{Programs, complexity and Automatic Differentiation}
\label{sec:ADcomplexity}

\subsection{Calculus model, programs, computational cost and complexity}
\label{sec:algorepresentation}
\label{def:programform}

A {\em dictionary} $\mathcal{D}$ is a finite set of real functions (\textit{e.g.} $\{+,-,\times,/\}$), it is paired with $\mathcal{P}^0(\mathcal{D})$, a set of elementary programs implementing them in real arithmetic. 
Starting from $\mathcal{P}^0(\mathcal{D})$, we aim at  capturing the notion of ``program of programs'' at any depth. As this is an inductive process, we call $k \in \NN$ a program ``level'', which is simply an induction counter needed for consistency.
Recursively, programs of level $k+1$, in $\mathcal{P}^{k+1}(\mathcal{D})$,  consist of combinations of outputs of programs of level $k$, in $ \mathcal{P}^{k}(\mathcal{D})$.
For example if $P_1$ and $P_2$ are elementary programs in $\mathcal{P}^0(\mathcal{D})$, then the program which sums the outputs of $P_1$ and $P_0$ is of level $1$. More precisely:

\begin{minipage}{.6\textwidth}
Let $p,q$ be input and output sizes respectively and $m \geq p+q$ a memory size.
A {\em predecessor relation} is a set valued map $\pr \colon \left\{ 1,\ldots,m \right\} \rightrightarrows \left\{ 1, \ldots, m \right\}$ such that for $i = 1,\ldots,m$ 

\textbullet\  for $j \in \pr(i)$, $j < i$.\\ 
\textbullet\  $\pr(i)$ is empty if $i \leq p$ and nonempty otherwise.

An {\em adapted program sequence} $(g_i)_{i=p+1}^m$  in $\pk$, is a set of programs such that $g_i$ has $|\pr(i)|$ input arguments and a single output, for all $i = p+1, \ldots, m$. 

Given $\left(p,q,m, \pr, (g_i)_{i=p+1}^m \right)$, the program given in Algorithm \ref{alg:algof} is a level $k+1$ {\em program on $\mathcal{D}$ }.
\end{minipage} \quad
\begin{minipage}[h]{0.37\linewidth}
\begin{algorithm}[H]
  \caption{}
	\label{alg:algof}
	\textbf{Program data:} $\left(p,q,m, \pr, (g_i)_{i=p+1}^m\right)$. 
  \begin{algorithmic}[1]
  \item[\textbf{Input:}] $x=(x_1, \ldots x_p)$
    \FOR{$i=p+1,p+2,\ldots m$}
    \STATE $x_i = g_{i} (x_{\pr (i)})$ where
    \STATE $x_{\pr (i)} = \left( x_j \right)_{j \in \pr (i)}$.
    \ENDFOR
		\item[\textbf{Return:}] $y := (x_j)_{j = m-q + 1}^m$.
\end{algorithmic}
\end{algorithm}
\end{minipage}

The set of {\em programs with dictionary $\mathcal{D}$} is $ \mathcal{P}(\mathcal{D}) = \bigcup_{k\geq 0}\mathcal{P}^k(\mathcal{D}).$ We shall see however that $\mathcal{P}^k(\mathcal{D})=\mathcal{P}^1(\mathcal{D})$ for all $k$, using modification of the computational graph.

A {\em cost on a dictionary $\mathcal{D}$} is a nonnegative function on $\mathcal{D}$, it extends additively by induction on programs on $\mathcal{D}$ through the rule   $\cost(P) = \sum_{i=1}^m \cost(g_i)$ where $P$ is a program on $\mathcal{D}$ as described in Algorithm \ref{alg:algof}.
A direct example is the dictionary of arithmetic functions $\{+,-,\times,/\}$, together with  addition or multiplication by fixed constants, denoted by $+c$ and $\times c$ respectively\footnote{Constants need to be distinguished from variables (for instance to define a polynomial)}, see also Section \ref{sec:comADcomplexity}. Throughout the paper, we assume that dictionaries contain at least operations $+$ and $\times$.

Each program on $\D$ may be represented by a program in $\mathcal{P}^1(\D)$ with the same cost, by expanding all subprograms until they reduce to an elementary program. Cost evaluation is thus well defined on such programs. As detailed in Appendix \ref{sec:comADcomplexity}, this model of computation is equivalently expressed using directed acyclic graphs.

To sum up, we have defined the set of programs $\mathcal{P}(\D)$ on $\mathcal D$, which includes programs of programs. The programs $g_i$ in Algorithm 1 may be taken in $\mathcal{P}(\D)$. The cost of a program is evaluated through the calls it makes to elementary programs in the dictionary.

\paragraph{Programs vs functions:}
A program $P$ defines a unique input-output function $f$: we say that $P$ ``computes'' $f$, or ``implements'' $f$, and with a slight abuse of notation, we will identify $P$ and $f$ when there is no ambiguity (\textit{e.g.} derivative of $P$). We use the equivalence relation $\sim$ to relate programs computing the same function. The equivalence classes correspond to functions expressible by programs with a given dictionary $\mathcal{D}$. Given a function $f \colon \RR^p \to \RR^q$ and a program $P$ on dictionary $\mathcal{D}$, with $p$ inputs and $q$ outputs, we write $f = [P]$ to denote the fact that $P$ is in the equivalence class of programs computing $f$, that is, $P$ implements $f$. 

\paragraph{Complexity of a function:}
The complexity of a function $f$ over a dictionary $\mathcal{D}$ is the quantity $\comp(f,\mathcal{D}) = \inf \left\{\cost({P}) ,\, s.t\quad P \in \mathcal{P}(\mathcal{D}),\, f = [P]\right\}$, the infimum being over all programs implementing $f$ on dictionary $\mathcal{D}$. It could be infinite, if it is finite then it is attained.

\subsection{Automatic differentiation} 
\label{def:algodifferentiation}

We pertain to programs implementing functions, that is Algorithm \ref{alg:algof} with single outputs $q = 1$.

Given a dictionary $\mathcal D$ of locally Lipschitz path differentiable functions, a {\em derived dictionary} is a set of functions $\mathcal{D}' \supset \mathcal{D}$ which extends $\mathcal D$ and contains operations required to express at least an element in a conservative gradient for each of the functions in $\mathcal D$, for example, an element in the Clarke subdifferential. We also consider a cost function on $\mathcal{D}'$, which we denote by cost and which extends to programs over $\mathcal{D}'$. Given programs $g_i$ on $\mathcal D$, $i = p+1, \ldots, m$, we define $d_i$ a {\em derived program} on $\mathcal{D}'$, with $|\pr(i)|$ inputs and outputs, which returns an element of a conservative gradient for $g_i$ (as for instance a Clarke subgradient, or simply a gradient in the $C^1$ case). By $gd_i$, we denote a program on ${\mathcal D}'$ evaluating $(g_i(x), d_i(x))$ jointly for a given $x$. We denote by Algorithm~\ref{alg:algof}', an extension of Algorithm~\ref{alg:algof} which additionally returns $w_i = d_i(x_{\pr(i)})$ for $i=p+1, \ldots, m$, by replacing line 2 in Algorithm~\ref{alg:algof} with a call to $gd_i$ instead of $g_i$.
The backward (resp. forward)  AD program $\backprop(P)$ (resp. $\forprop(P)$) is defined as follows: 

\begin{algorithm}[ht]
  \caption{Algorithmic differentiation of $P$ as in Section \ref{def:programform}}
	\label{alg:autodiff0}
	\textbf{Input:}  variables $(x_i)_{i=1}^{p}$\\
	\textbf{Forward evaluation with derivatives:} evaluate $w_i = d_i(x_{\pr(i)})$, $i=p+1, \ldots, m$, \\with Algorithm \ref{alg:algof}': Algorithm \ref{alg:algof} with $gd_i$ instead of $g_i$ on line 2.
	
	\vspace{.1in}
   \begin{minipage}[h]{0.47\linewidth}
   \begin{algorithmic}[1]
    \STATE  \textbf{Forward mode:}
    \STATE Initialize: $
    \frac{\partial x_i}{\partial x} = e_i $ , $i = 1,\ldots, p$, \\from canonical basis in $\RR^p$.
    \FOR{$i= p+1, \ldots m$}
    \STATE 
    \[
    \frac{\partial x_i}{\partial x} = \sum_{j \in \pr (i)} \frac{\partial x_j }{\partial x} w_{i}[j]
    \]
    \mbox{where $x=(x_1,\ldots,x_p).$}
    \ENDFOR
    \item[\textbf{Return:}]$\frac{\partial x_m}{\partial x}$ and eventually $x_m$.
  \end{algorithmic}
  \end{minipage}
  \begin{minipage}[h]{0.47\linewidth}
     \begin{algorithmic}[1]
     \STATE \textbf{Backward mode:}
    \STATE Initialize: $ v = e_m$
    \FOR{$t= m, \ldots p+1$} 
    \FOR{$j \in \pr (t)$}
        \STATE Update coordinate $j$ of $v$:
    \[
    v[j] \mathrel{:=}  v[j] + v[t] w_{t}[j]
    \]
    \ENDFOR
    \ENDFOR
    \item[\textbf{Return:}]
$\left(v[j]\right)_{j=1}^p$ and eventually $x_m$.
  \end{algorithmic}
  \end{minipage}
\end{algorithm}

Note that Algorithm \ref{alg:autodiff0} starts with Algorithm \ref{alg:algof}', i.e., Algorithm \ref{alg:algof} with $gd_i$ instead of $g_i$ on line 2. 
Its computational cost, denoted $\cost(gd_i)$, should be thought of as an exogenous parameter: {\em it may model, for instance, the use of underlying software libraries or the hardware properties. }

\section{Computational complexity of Nonsmooth AD}
\label{sec:conservativecomplexity}

We now evaluate the complexity of the $\forprop$ and $\backprop$ operations for conservative gradients in the path-differentiable case -- which encompasses, as mentioned earlier, all semi-algebraic and definable locally Lipschitz functions. We show, in particular, that backpropagation with conservative gradients has a computational overhead ratio that is independent of the dimension. This is in contrast with the best known algorithmic oracles for the Clarke subdifferential (see \cite{khan2012evaluating,khan2013evaluating,khan2015vector,barton2018computationally} and Appendix \ref{sec:comnonsmoothAutodiff}), whose computational overhead ratio scales linearly with the dimension. 
\begin{theorem}[Complexity of nonsmooth AD]
\label{th:cheapconservative}
  Let $P$ be a program over a dictionary $\mathcal{D}$ of path-differentiable functions with $p$ inputs as in Algorithm 1 \& 2. Then, the corresponding function $[P]$ is path differentiable, there is a conservative gradient $D_P$ for the function $[P]$ such that: 
  
\noindent  
(i)   {\rm \bf (Cost of backward mode)}   At each input point $x \in \RR^p$, the output of program $\backprop(P)$ is in $D_P(x)$ and we have $\cost(\backprop(P)) \leq \omega_b\ \cost(P),$ where 
{\rm 
\begin{align}
    \label{eq:costBackProp}
     \omega_{b}  &= \max _{i=p+1,m} \left\{(\cost(gd_{i})  + 2 \max (\cost(+), \cost(\times))|{\rm \pr(i)}|) \  / \ \cost\left(g_{i}\right) \right\}.
\end{align}
}
\noindent
(ii) {\rm \bf (Cost of forward mode)} At each input point $x \in \RR^p$, the output of program $\forprop(P)$ is in $D_P(x)$ and we have
$\cost(\forprop(P)) \leq \omega_{f} \times  \cost(P)$
where
{\rm
\begin{align}
     \omega_{f}&= \max _{i=p+1,m} \left\{(\cost\left(gd_i\right)+p|\pr(i)|\cost(\times)+p(|\pr(i)|-1)\cost(+)) \ / \ \cost\left(g_{i}\right)\right\}.\nonumber
\end{align}
}
\end{theorem}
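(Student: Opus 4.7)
I would break the proof into three stages: (a) constructing a conservative gradient $D_P$ for $[P]$, (b) verifying that both AD algorithms return a selection in $D_P(x)$, and (c) counting arithmetic operations. Since Section~\ref{sec:algorepresentation} notes that every program in $\mathcal{P}^k(\mathcal{D})$ is cost-equivalent to one in $\mathcal{P}^1(\mathcal{D})$ by expansion of sub-programs, I would reduce to the case where every $g_i$ is elementary and the cost is $\cost(P) = \sum_{i=p+1}^m \cost(g_i)$.

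\textbf{Constructing $D_P$ and verifying AD correctness.} Each elementary $g_i$ is path differentiable by hypothesis and $d_i$ is a selection in a conservative Jacobian for $g_i$. Using the composition result for conservative Jacobians recalled after Definition~\ref{def:conservativeGradient} (from \cite{bolte2020conservative}), I would prove by induction on $i=p+1,\ldots,m$ that the intermediate map $x\mapsto (x_1,\ldots,x_i)$ is path differentiable and admits a conservative Jacobian built as the chain-rule product of the $d_j$'s along predecessors up to $i$. Taking $i=m$ yields the desired $D_P$ together with path differentiability of $[P]$. Both modes of Algorithm~\ref{alg:autodiff0} compute this chain-rule product: the forward mode propagates $\partial x_i/\partial x\in\RR^p$ left-to-right via the recursion of line~4, while the backward mode accumulates $v\in\RR^p$ right-to-left through $v[j]\leftarrow v[j]+v[t]w_t[j]$. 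A direct bookkeeping argument (unfolding one recursion step at a time) shows that at termination each algorithm returns exactly the Jacobian selection furnished by the inductive construction, hence an element of $D_P(x)$.

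\textbf{Cost accounting.} Both modes begin with the forward-with-derivatives pass of cost $\sum_{i=p+1}^m \cost(gd_i)$. For the backward mode, the sweep contributes one multiplication and one addition per $j\in\pr(i)$, bounded by $|\pr(i)|(\cost(+)+\cost(\times)) \leq 2|\pr(i)|\max(\cost(+),\cost(\times))$ at node $i$. For the forward mode, each node updates a $p$-dimensional vector by a linear combination of $|\pr(i)|$ terms, costing $p|\pr(i)|\cost(\times)+p(|\pr(i)|-1)\cost(+)$. Summing over $i$ and using the elementary inequality $\sum_i a_i \leq \bigl(\max_i a_i/b_i\bigr)\sum_i b_i$ with $b_i=\cost(g_i)$ directly yields $\cost(\backprop(P))\leq \omega_b\,\cost(P)$ and $\cost(\forprop(P))\leq \omega_f\,\cost(P)$.

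\textbf{Expected main difficulty.} The arithmetic counting and the AD correctness verification are essentially mechanical once the model of Section~\ref{sec:ADcomplexity} is in place; the delicate point is the inductive construction of $D_P$. One must confirm that the product of set-valued maps assembled from the $d_i$'s is a genuine conservative gradient, inheriting local boundedness, graph-closedness, and the line-integral identity \eqref{eq:chainRule}, and that these properties compose along the predecessor structure without interfering. This reduces to a careful repeated application of the Bolte--Pauwels composition rule, where the acyclicity condition $j<i$ for $j\in\pr(i)$ from Section~\ref{def:programform} is what guarantees that the inductive step is well posed. Everything else is organized bookkeeping.
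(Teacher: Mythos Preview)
Your proposal is correct and follows essentially the same approach as the paper. The paper's proof is even more terse on parts (a)--(b): it simply invokes composition of path-differentiable functions and cites \cite{bolte2020conservative} for the construction of $D_P$, then carries out exactly the cost accounting you describe (the same per-node formulas and the same $\sum_i a_i \le (\max_i a_i/b_i)\sum_i b_i$ inequality).
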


There is a dissymmetry between the two modes since the constant $\omega_b$ is independent of the dimension $p$. This is why property (i) is sometimes called the ``cheap conservative gradient principle'' extending the classical smooth one which was derived by \cite{bauerstrassen1983} for real rational functions. Theorem \ref{th:cheapconservative} describes worst case upper bounds (maximum over $i$), which are tight, for example if $pr(i)$, costs of $g_i$ and $gd_{i}$ are independent of $i$.

We will consider several examples now.

\paragraph{The class of ReLU programs}
\label{sec:ReLUprograms}

Let $\mathcal{D}_{\ReLU}$ be the dictionary composed of elementary arithmetic operations, logarithm, exponential and the $\ReLU$ function:
\begin{align}
    \mathcal{D}_{\ReLU} := \{+, \times, +c, \times c, \text{inv}, \exp, \log, \ReLU\}. 
    \label{eq:defDReLU}
\end{align}
A ReLU program $P$ is a program with dictionary $\mathcal{D}_\ReLU$; it can be expressed in a compositional form (Section \ref{def:programform}) with program sequences in $\mathcal{D}_\ReLU$. Note that this yields path differentiable functions.

\begin{assumption} [Computational Cost] {\rm
\label{ass:unitcost}
In Algorithms \eqref{alg:autodiff0}, define the dictionary $\mathcal{D'}_{\ReLU} := \mathcal{D_{\ReLU}} \cup \{\ReLU' \}$ as in Example \ref{ex:introReLU}; then, all operations from $\mathcal{D'}_{\ReLU}$ have unit cost (see Remark \ref{rem:generalCost}).}
\end{assumption}
\begin{corollary}[Backprop complexity of $\ReLU$ programs]
\label{cor:ReLUAD}
Let P be a  $\ReLU$ program, under Assumption~\ref{ass:unitcost}, we have: $\cost(\backprop(P)) \leq 5 \cost(P)$. This extends to more complex cost weighting schemes (Remark \ref{rem:generalCost}) and to selection functions which virtually capture all losses in ML (Remark \ref{rem:beyondRelu}).
\end{corollary}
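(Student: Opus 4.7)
The plan is to derive the bound as a direct application of Theorem~\ref{th:cheapconservative}(i), the cheap conservative gradient principle, by computing the maximum in the definition of $\omega_b$ explicitly for the dictionary $\mathcal{D}_{\ReLU}$. Since every function in $\mathcal{D}_{\ReLU}$ is locally Lipschitz and semi-algebraic or definable (hence path differentiable), Theorem~\ref{th:cheapconservative} applies. Under Assumption~\ref{ass:unitcost} we have $\cost(+)=\cost(\times)=1$, so $\max(\cost(+),\cost(\times))=1$ and the bound reduces to $\omega_b = \max_i\bigl(\cost(gd_i)+2|\pr(i)|\bigr)$ with $\cost(g_i)=1$.

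The core of the proof is then a case analysis enumerating every operation in $\mathcal{D}_{\ReLU}$, bounding $\cost(gd_i)$ (the cost of jointly returning $g_i$ and a selection in its conservative Jacobian $d_i$) while keeping track of the arity $|\pr(i)|$. I would group the cases as follows. For the binary operations $+$ and $\times$ (arity $2$), the derivatives are either constants or the already-read input values, so $\cost(gd_i)=1$ and the contribution is $1+2\cdot 2=5$. For the affine unary operations $+c,\times c$ (arity $1$), the derivatives are constants and $\cost(gd_i)=1$, giving $1+2=3$. For $\exp$ (arity $1$), the derivative equals the value, so a single evaluation suffices and $\cost(gd_i)=1$, giving $3$; for $\ReLU$ (arity $1$), appending one call to the selection $\ReLU'$ of Example~\ref{ex:introReLU} gives $\cost(gd_i)=2$, hence $2+2=4$. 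For $\log$ (arity $1$), the derivative is $1/x$, one extra $\mathrm{inv}$, so $\cost(gd_i)=2$ and the contribution is $4$. Finally for $\mathrm{inv}$ (arity $1$), the derivative $-1/x^2$ can be recovered from the already computed $1/x$ by one multiplication and a sign change, so $\cost(gd_i)\le 3$ and the contribution is $3+2=5$.

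Taking the maximum over these cases yields $\omega_b\le 5$, and Theorem~\ref{th:cheapconservative}(i) concludes $\cost(\backprop(P))\le 5\,\cost(P)$. The extensions mentioned (Remark~\ref{rem:generalCost} for non-unit but bounded per-operation costs, and Remark~\ref{rem:beyondRelu} for selection functions arising from other standard ML losses) are handled by substituting the appropriate weights into the same expression for $\omega_b$ and re-running the case analysis, the path differentiability of the new primitives being the only property actually needed to invoke Theorem~\ref{th:cheapconservative}.

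The main obstacle is not any deep argument but careful bookkeeping in two places: (a) choosing the selection $d_i$ for each primitive so that the joint evaluation $gd_i$ reuses intermediate quantities (notably for $\exp$ and $\mathrm{inv}$, where naive recomputation would blow up the constant), and (b) checking that the resulting selection is indeed a selection in a conservative Jacobian of $g_i$, which is immediate for the $C^1$ primitives and was verified for $\ReLU$ via Example~\ref{ex:introReLU}. Once these choices are fixed, the bound $5$ is tight and attained by $+$, $\times$, and $\mathrm{inv}$, matching the classical Baur--Strassen constant for rational functions.
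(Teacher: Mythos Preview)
Your proposal is correct and follows essentially the same route as the paper: invoke Theorem~\ref{th:cheapconservative}(i), reduce $\omega_b$ under Assumption~\ref{ass:unitcost} to $\max_i(\cost(gd_i)+2|\pr(i)|)$, and do a case analysis over $\mathcal{D}_\ReLU$ that reproduces Table~\ref{ex:tablecomplexity}. The only minor discrepancy is the $\ReLU$ entry: the paper argues that since both $\ReLU$ and $\ReLU'$ require only the same sign check, one may take $\cost(gd)=1$ (yielding contribution $3$), whereas you charge a separate unit for $\ReLU'$ and get $4$; either choice is consistent with the framework and neither affects the final maximum $5$.
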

\begin{table}[ht]
\caption{Complexity constant of $\omega_b$ in Theorem  \ref{th:cheapconservative} for elementary $g$ in $\mathcal{D_{\ReLU}}$ and derived program with dictionary $\mathcal{D}'_\ReLU$. This proves Corollary \ref{cor:ReLUAD} (more details in appendix \ref{a:justificationtablecomplexity}).}
\begin{center}
\begin{tabular}{|c|c|c|c|c|c|c|c|c|c|}
\hline
$g$  & $(+,\times)$ & $(+ c, \times c)$& $\log$ & $\exp$ & inv & $\ReLU$ \\ \hline
$\displaystyle \left(\cost(gd)  + 2 \cost(\times)|\pr| \right)\ /\ \cost\left(g\right)$&  $5$  &  $3$ &  $4$    & $3$    & $5$   &  $3$  \\ \hline
\end{tabular}
\end{center}
\label{ex:tablecomplexity}
\end{table}
\begin{remark}[On refined cost systems]{\rm 
    \label{rem:generalCost}
    Unit cost in Assumption \ref{ass:unitcost} gives a simple interpretation to Corollary \ref{cor:ReLUAD}: the cost of a program is the total number of numerical operations. This rough estimate of computational complexity, could be refined with different weighting schemes. However, the obtained constant $5$ is robust to many different weighting choices, far beyond Assumption \ref{ass:unitcost}. We detail an example in the Appendix \ref{sec:generalCost} for which the cost of all smooth nonlinear operations different from $+$ or $\times$ is  $c_{\mathrm{nonlin}}\geq1$ and we model the cost of sign branching in computation of $\ReLU$ and $\ReLU'$ with constant $c_\ReLU\geq0$. This yields the same constant as in Corollary \ref{cor:ReLUAD}.}
\end{remark}

\begin{remark}[Beyond ReLU programs]{\rm \label{rem:beyondRelu}
Many other dictionaries could be considered. ReLU is an example chosen for its simplicity, but Corollary~\ref{cor:ReLUAD} would hold similarly (with the same constant $5$) for many different nonsmooth activations or components such as absolute value, max-pooling, ELU function, $\ell_1$ and $\ell_\infty$ norms. Similar results could be developed for the class of selection functions, which encompasses the vast majority of ML building blocks (see \cite{bolte2020mathematical}). This is sketched in Appendix \ref{sec:additionalCost}. 
}\end{remark}

\paragraph{Chaining backpropagation derived programs}
\label{sec:combiningADdiff}
Our approach is flexible enough to describe ``programs of programs'' and backpropagation chaining. 
Let $P$ be a program as in Algorithm \ref{alg:algof}, with adapted $\ReLU$ program sequence $ \{(g_i)_{i=p+1}^m\}$. If $\cost(g_i) \gg |\mathrm{pr}(i)|$, $g_i$ is a ``long program'', with many operations per input.% Such programs are {\em programs of ReLU programs}. 
We may set $gd_i = \backprop(g_i)$ using Algorithm \ref{alg:autodiff0}, $i = p+1, \ldots, m$. From Corollary \ref{cor:ReLUAD}, we have $\cost(gd_i) / \cost(g_i) \leq 5$, and for long programs $\omega_{b} \simeq 5$ in  Theorem   \ref{th:cheapconservative}. This illustrates the versatility of our approach as it captures the complexity of chaining $\backprop$ operations, the resulting estimate being quite sharp in the regime of long programs.

\paragraph{Beyond backpropagation}
\label{sec:combiningotherdiff}
Programs may be differentiated by other means than backpropagation. Examples include, forward propagation, with applications in optimization and algorithmic unrolling \citep{mehmood2020automatic,lorraine2020optimizing,maclaurin2015gradient}, 
implicit differentiation  \cite{agrawal2018rewriting,winston2020monotone,bai2019deep,bolte2021nonsmooth} with application in optimization 
and hyperparameter optimization \citep{bertrand2020implicit}, adjoint differentiation \citep{plessix2006review} in programs with components involving ordinary differential equations \citep{courtier1997,chen2018neural},  differentiation of conjugate gradient \citep{gratton2014}, Cholesky algorithm \citep{smith1995differentiation}, approximation of Jacobian matrices involving a non-uniform FFT \citep{fessler2021}.

Let $P$ be a program as in Algorithm \ref{alg:algof}. Theorem  \ref{th:cheapconservative} relates the complexity of combining derived programs in Algorithm \ref{alg:autodiff0} to the following quantities, for $i = p+1, \ldots, m$: 

\textbullet\ $\cost(gd_i) / \cost(g_i)$: the ``computational overhead ratio". \\
\textbullet\ $|\pr(i)|\cost(\times) / \cost(g_i)$: the ratio between multiplication cost and average cost per input argument. 

The first quantity depends on the technique used to obtain $gd_i$. The second quantity is typically less than 2 (at least one arithmetic operation per input) and becomes negligible for long programs (many operations per input).

For example in \cite{mehmood2020automatic,lorraine2020optimizing,maclaurin2015gradient}, for one $i$, the program $gd_i$ is an optimization algorithm in $\RR^p$, a long program differentiated using forward propagation. The corresponding overhead ratio is in this case $3p+5$ (Theorem \ref{th:cheapconservative}). If combined with an outer backward pass, we obtain a dimension-dependent overhead ratio, in contrast with full backward differentiation. Our model provides computational cost estimates for mixed techniques, here a combination of inner forward and outer backward propagation.

\section{On the computational hardness of generalized gradients}
\label{sec:nonsmoothAutodiffHardness}
Let $P$ and $DP$ be two programs such that $DP$ evaluates jointly $P$ and a derivative of $P$. In the sequel, we use the term ({\em computational}) {\em overhead ratio} of $DP$ to denote the quantity $\frac{\cost(DP)}{\cost(P)}$ and computational overhead ratio of derivatives of $P$ to denote the quantity $\frac{\comp(DP)}{\cost(P)}$. 
As established in Theorem \ref{th:cheapconservative}, this ratio is dimensionless in the case of backpropagation with conservative gradients. Are there other ways to compute cheap nonsmooth gradients? Toward an answer to this question, we discuss this ratio for other nonsmooth differentiation oracles: directional derivatives (for which we relate worst-case complexity to that of matrix multiplication), lexicographic derivatives with forward AD (with an overhead ratio of order $p$ \cite{barton2018computationally}). As for the Clarke subdifferential, we prove the hardness of subgradients enumeration. Our motivation to estimate the complexity of these particular types of derivatives (directional, lexicographic and Clarke) is that they serve as a basis to alternative implementable AD approaches (see \cite{barton2018computationally} and references therein), and are thus concurrent strategies  of conservative gradient backpropagation.
The results presented below  do not provide a definitive answer, but they strongly suggest that backpropagation of conservative gradients has a much more favorable complexity.

\subsection{The overhead ratio for evaluating \texorpdfstring{$p$}{p} directional derivatives}
\label{sec:directionalOverhead}
Given $G \colon \RR^p \to \RR$  locally Lipschitz and $x,d \in \RR^p$, the directional derivative of $G$ at $x$ in direction $d$ is given by $\lim_{t \downarrow 0} (G(x+td) - G(x)) / t$ when the limit exists. 
This section considers a family of functions with $p$ inputs and  $q$ real parameters, represented by a locally Lipschitz function $F \colon \RR^p \times \RR^q \to \RR$, for which we investigate hardness of evaluation of $p$ directional derivatives. The function $F$ may describe, for instance, a $\ReLU$ feedforward neural network empirical loss, parameterized by $q$ real weights, with $p$ inputs. For functions represented by $\ReLU$ programs, we prove an overhead ratio of order $p^{\omega-2+o(1)}$ where $\omega$ is the matrix multiplication exponent (see definition below).
In all rigor, it is not known whether $\omega > 2$ or $\omega = 2$, so the derived ratio could be essentially dimensionless (if $\omega = 2$), though all practical evidences are against this so far. The best known lower bound is $\omega < 2.37$ , and in practice, the matrix multiplication exponent is closer to $2.7$, both corresponding to a dimension-dependent overhead, in contrast with the smooth case with essentially dimensionless overhead ratio to evaluate $p$ directional derivatives (essentially a gradient).

\paragraph{Complexity of matrix multiplication:} Throughout this section, we set $\mathcal{D} = \{+,\times,+c, \times c\}$, with unit costs (corresponding to polynomial functions). Denote by $c(p)$ complexity of $p \times p$ matrix multiplication. More precisely, if $f\colon \RR^{p\times p} \times \RR^{p \times p} \to \RR^{p\times p}$ is such that $f(A,B) = AB$ for all, square matrices $A,B \in \RR^{p \times p}$,  we have $c(p) = \comp(f,\mathcal{D})$, which we may write $c(p)=p^{\omega+o(1)}$ where $\omega$ is called the {\em matrix multiplication exponent}. 
 Note that $c(p) \geq p^2$, as one needs at least one operation for each of the $2p^2$ entries.

 \paragraph{Directional derivatives:} Given a function $F \colon \RR^p \times \RR^q \to \RR$, we denote by $F_1'\colon \RR^p \times\RR^q \times \RR^{p\times p} \to \RR^p$ the function which associates to $x \in \RR^p$, $y \in \RR^q$ and a matrix $A \in \RR^{p\times p}$ the $p$ directional derivatives with respect to $x$ variable, for fixed $y$, in directions given by the columns of $A$. 
The proof of the following theorem is given in Section \ref{sec:proofNonsmoothAutodiff}.
\begin{theorem}[Computational ratio for directional derivatives]
\label{th:theoremLowerBoundDirectionalDerivatives}
    There exists a function $F \colon \RR^p \times \RR^q \to \RR$ and a program $P_F$ implementing $F$ on dictionary $\{+,\times, \ReLU,+c, \times c\}$ (all operations have unit cost), such that for any program $P'$ implementing $(y,A) \mapsto F_1'(0,y,A)$ on derived dictionary $\{+,\times, \ReLU, \ReLU', +c, \times c\}$,
    \begin{align}
        \cost(P') / \cost(P_F) &\geq (c(p)-5p) / (40p^2) = p^{\omega - 2 + o(1)}.
        \label{eq:directionalDerviativesLowerBounds}
    \end{align}
\end{theorem}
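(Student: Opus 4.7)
The plan is a reduction from $p \times p$ matrix multiplication: exhibit a cheap function $F$ and show that $p$ parallel invocations of any program $P'$ for $F_1'(0, \cdot, \cdot)$ can be assembled, at no extra arithmetic cost, into a program multiplying two $p \times p$ matrices; combined with the lower bound $c(p)$ on matrix multiplication, this forces $\cost(P') \gtrsim c(p)/p$, which delivers the announced ratio.

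First I take $q = p$ and set $F(x, y) = \langle y, x\rangle = \sum_{i=1}^p y_i x_i$. A direct implementation $P_F$ on $\{+, \times, \ReLU, +c, \times c\}$ uses $p$ multiplications and $p-1$ additions, so $\cost(P_F) = 2p-1$. Since $F$ is $C^\infty$ in $x$, it is path differentiable, and for every $A \in \RR^{p\times p}$ the $p$ directional derivatives at $x = 0$ in the directions of the columns of $A$ give exactly $F_1'(0, y, A) = A^\top y$.

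Next, from any program $P'$ on the derived dictionary $\{+, \times, \ReLU, \ReLU', +c, \times c\}$ computing $(y, A) \mapsto F_1'(0, y, A)$, I build a program $M$ for $(B, C) \mapsto BC$ as follows: for each $k = 1, \ldots, p$, instantiate a fresh copy of $P'$ whose $y$-input is wired to the row $(B_{k,1}, \ldots, B_{k,p})$ of $B$ and whose $A$-input is wired to $C$. By the previous step the $j$-th output of the $k$-th copy equals $\sum_{i=1}^p C_{ij} B_{ki} = (BC)_{kj}$, so concatenating outputs across $k$ yields $BC$ entirely. In the compositional model of Section \ref{sec:algorepresentation}, wiring variables is free of arithmetic cost, whence $\cost(M) \leq p \cdot \cost(P')$. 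Now $c(p)$ is defined as the cost of matrix multiplication over the polynomial dictionary $\{+, \times, +c, \times c\}$, so I must verify that the presence of $\ReLU, \ReLU'$ does not reduce this cost. Pick any input of $M$ at which every $\ReLU$ gate lies in the interior of one of its two linear pieces; on the surrounding open region each such gate reduces to a linear branch and each $\ReLU'$ to a constant, so freezing these substitutions produces a polynomial-dictionary program $M'$ of the same cost that agrees with $M$ on an open set. Since matrix multiplication is a polynomial, polynomial identity forces $M' \equiv M$ everywhere, hence $c(p) \leq \cost(M') = \cost(M) \leq p \cdot \cost(P')$. Dividing by $\cost(P_F) = 2p - 1$ and absorbing constants,
\[
\frac{\cost(P')}{\cost(P_F)} \;\geq\; \frac{c(p)}{p(2p-1)} \;\geq\; \frac{c(p)-5p}{40\,p^2},
\]
and $c(p) = p^{\omega+o(1)}$ makes this $p^{\omega - 2 + o(1)}$.

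The main obstacle is the last point: rigorously extending the lower bound $c(p)$ from the polynomial dictionary to the derived dictionary that includes $\ReLU, \ReLU'$. The generic-point specialization above settles it cleanly, using that the target function $BC$ is polynomial so coincidence on an open set propagates globally. Everything else is mechanical once the compositional model's cost semantics are pinned down, and only the bookkeeping with constants in the final inequality requires slight care to match the exact form of the stated bound.
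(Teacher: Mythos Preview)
Your argument is correct as a proof of the literal inequality, but it follows a quite different route from the paper. The paper picks a genuinely nonsmooth $F(x,B,M)=\frac{1}{p}\sum_i |[UB^TMx]_i|$ with $\cost(P_F)\le 8p^2$; on an open set of parameters the directional derivatives collapse to $b_i^TMa_i$, so summing over $i$ yields $G(A,B,M)=\mathrm{Tr}(MAB^T)$, implemented at cost $\le \cost(P')+p$. After stripping $\ReLU,\ReLU'$ via the generic-point argument (their Lemma~\ref{lem:constantComplexity}), the paper applies the smooth Baur--Strassen bound to $G$: backpropagation gives a program for $\nabla_M G = BA^T$ at cost $\le 5(\cost(P')+p)$, whence $c(p)\le 5\cost(P')+5p$ and the ratio $(c(p)-5p)/(40p^2)$. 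You instead take the smooth bilinear $F(x,y)=\langle y,x\rangle$ with the much smaller $\cost(P_F)=2p-1$, observe that $P'$ then computes matrix--vector products, and stack $p$ independent copies to multiply two $p\times p$ matrices, giving $c(p)\le p\,\cost(P')$.

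Both approaches rely on the same linearize-on-an-open-set device to pass the $c(p)$ lower bound from the polynomial dictionary to the derived one; your sketch is slightly looser than the paper's (a generic point with all $\ReLU$ inputs nonzero need not exist, but the right statement---that the signs, being finite-valued semi-algebraic, are locally constant on a dense open set---repairs this immediately). What the paper's construction buys is a nonsmooth witness whose cost is already $\Theta(p^2)$: the hardness then genuinely reflects the nonsmooth directional-derivative overhead, and the bound is tight for linear $\ReLU$ networks (Remark~\ref{rem:tightEstimateDirectionalDerivatives}). Your witness is smooth and its large ratio comes from $\cost(P_F)\approx p$ being well below $p^2$, so you are really exhibiting that computing $A^Ty$ must touch all $p^2$ entries of $A$; this is exactly the regime the paper's ``Comparison with the smooth case'' paragraph excludes. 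As a proof of the theorem as stated, though, your reduction is valid and more elementary (no appeal to the cheap gradient principle).
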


Theorem \ref{th:theoremLowerBoundDirectionalDerivatives} has $q$ parameters, parametric dependency is required to express hardness. Indeed, for some parameter values, computation may be trivial (e.g. null values). Alternatively, it states that for some values of the $q$ parameters, computing $p$ directional derivatives has cost as in \eqref{eq:directionalDerviativesLowerBounds}.

The bound in \eqref{eq:directionalDerviativesLowerBounds} is sharp up to multiplicative constants for linear ReLU networks, see Remark \ref{rem:tightEstimateDirectionalDerivatives} in Appendix~\ref{sec:comnonsmoothAutodiff}.

\textbf{Consequences:} 
Our overhead estimate is roughly $ p^{\omega - 2}$, it constitutes a bottleneck: a ``cheap nonsmooth $p$ directional derivatives principle'', would imply easy matrix multiplication, to the point that $\omega = 2$. 
Since the seminal work of \cite{strassen1969gaussian}, it is known that $\omega \leq \log_2(7) \simeq 2.81$. Determining the precise exponent $\omega$ has been an object of intense research \cite{robinson2005toward}. Asymptotically, one has $2 \leq \omega < 2.373$, see \cite{williams2012multiplying,le2014powers}, the best known bound being given in \cite{alman2021refined}.   In this case, the estimate in \eqref{eq:directionalDerviativesLowerBounds} is roughly $p^{0.373}$.

These estimates may involve non-constructive existence proofs, or  suffer from the curse of recursion: meaningful efficiency occurs only for inaccessible sizes. According to \cite{dumas2016fast}, for  values $p\leq 10^6$ the most efficient practical algorithms have a complexity of the order $p^{2.774}$, resulting in an overhead of order $p^{0.774}$, in contrast with the constant overhead incurred by nonsmooth backpropagation. 
More discussion is given in Appendix \ref{sec:comnonsmoothAutodiff}.

\paragraph{Comparison with the smooth case:}
If $F$ is $C^1$, evaluating $p$ directional derivatives is comparatively easier because $F'(x,d) = \left \langle \nabla F(x), d\right\rangle$ for all $x,d \in \RR^p$. Hence, one may first evaluate $\nabla F$ (once), at a cost similar to that of $F$ (cheap gradient principle), and then evaluate $p$ scalar products, at a cost $p^2$. If the cost of $F$ is of order $p^2$ at least (for example $F$ is a feedforward neural network with $p$ inputs and a layer of $p$ hidden neurons), then this is overall proportional to the cost of computing $F$.

\subsection{Computing Clarke subgradients using forward automatic differentiation}
\label{sec:khanBarton}

In \cite{khan2012evaluating,khan2013evaluating,khan2015vector}, several automatic differentiation strategies are proposed to evaluate elements of the Clarke subdifferential. These approaches are based on directional \citep{shapiro1990concepts} and lexicographic derivatives \citep{nesterov2005lexicographic} which satisfy a chain rule under structural assumptions. The chain rule may be implemented using the vector forward mode of automatic differentiation \citep{barton2018computationally}, which suffers from computational overhead scaling linearly in $p$, contrary to the reverse mode in Theorem \ref{th:cheapconservative}. Reducing this factor is an open question, even for compositional functions involving only univariate nonsmoothness such as absolute value \citep{khan2018branch}. More details are given in Appendix \ref{sec:suppKhanBarton}.

\subsection{Computational hardness of subgradient enumeration}
\label{sec:NPhardness}

We investigate in this section the hardness finding subgradients for programs  defined on the elementary dictionary ${\mathcal D}_{0}=\{+,-,\ReLU\}$ with unit costs. Let us denote by $\mathcal{P}(\mathcal{D}_0)$ the set of such programs. We will, with a slight abuse of notation, identify a program $P \in \mathcal{D}_{0}=\{+,-,\ReLU\}$ with the function it computes to state our complexity result (proof in Section \ref{sec:proofNPhardness}).

\begin{theorem}[Clarke subgradients and NP-Hardness]\hfill\\
			(i) The problem of finding two distinct sub\-gradients in the Clarke subdifferential of $P \in \mathcal{P}(\mathcal{D}_0)$ at given input (or one single subgradient if it is reduced to a singleton) is NP-hard.\\	
			(ii) Deciding if $P \in \mathcal{P}(\mathcal{D}_0)$ is not differentiable at some given input is NP-hard.
				\label{th:npCompletePrograms}
\end{theorem}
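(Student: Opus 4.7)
The plan is to prove part (ii) by a polynomial-time reduction from an NP-complete problem such as SUBSET SUM, and then to deduce part (i) from part (ii). Given an instance of SUBSET SUM with positive integer weights $a_1, \ldots, a_n$ and target $T$, I would construct in polynomial time a program $P \in \mathcal{P}(\mathcal{D}_0)$ on variables $x_1, \ldots, x_n$ (possibly enriched with a few auxiliary variables) and select an input point $\bar x$, such that $[P]$ fails to be differentiable at $\bar x$ if and only if there exists $y \in \{0,1\}^n$ with $\sum_i y_i a_i = T$. Candidate bits $y_i$ would be encoded through the sign of $x_i$ near $\bar x = 0$, using the elementary identities $\ReLU(x) - \ReLU(-x) = x$ and $\ReLU(x) + \ReLU(-x) = |x|$ to alternately expose or cancel local kinks. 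The top layer would be an absolute value (expressible as $\ReLU + \ReLU\circ(-\cdot)$) applied to a linear expression $\sum_i a_i \, \tilde{y}_i(x_i) - T$, designed so that the only surviving kink at $\bar x$ corresponds to a sign pattern realising the SUBSET SUM identity.

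For part (i), I would exploit the fact that every program in $\mathcal{P}(\mathcal{D}_0)$ implements a continuous piecewise linear function $F$, and that for such $F$ the Clarke subdifferential $\partialc F(\bar x)$ is a singleton if and only if $F$ is differentiable at $\bar x$; indeed each side is equivalent to $\bar x$ belonging to the interior of a single full-dimensional affine piece of $F$. Consequently, any algorithm returning two distinct Clarke subgradients when they exist, and the unique subgradient when the subdifferential is a singleton, implicitly decides whether $[P]$ is differentiable at $\bar x$. A polynomial-time solver for (i) would therefore yield a polynomial-time decision procedure for (ii), so the NP-hardness of (ii) transfers to (i) by a polynomial-time Turing reduction.

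The hard part will be the construction in (ii): the gadget must guarantee that the kinks of individual $\ReLU$ gates cancel exactly in the ``no-solution'' case and persist in the ``yes-solution'' case. A naive candidate such as $F(x) = \bigl|\sum_i a_i \ReLU(x_i) - T\bigr|$ does not work, since it is non-differentiable at $\bar x = 0$ regardless of the SUBSET SUM answer (the directional-derivative map $s \mapsto \sum_{i:\,s_i>0} a_i$ is not linear in $s$). The correct construction therefore requires paired $\ReLU(\pm\,\cdot)$ terms that combine to smooth linear pieces away from the solution set, together with one or two auxiliary variables that isolate the combinatorial test. Once such a gadget is exhibited, its size is polynomial in $n$ and in the bit-length of the $a_i$, and correctness is verified by enumerating the directional derivatives of $[P]$ in each of the $2^n$ sign directions at $\bar x$ and checking that the list collapses to a single linear form exactly when no subset of weight $T$ exists.
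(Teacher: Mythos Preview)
Your reduction of (i) to (ii) via a Turing reduction is sound and matches the paper's logic: for piecewise linear $F$, the Clarke subdifferential at a point is a singleton exactly when $F$ is differentiable there, so an oracle for (i) decides (ii).

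The genuine gap is in (ii). You correctly diagnose that the naive candidate fails, but you never exhibit a working gadget; you only list the properties it should have. Two concrete obstacles make the SUBSET SUM route harder than you suggest. First, $\mathcal{D}_0=\{+,-,\ReLU\}$ contains no constants, so there is no direct way to introduce the target $T$ into the program; you would have to feed it through the input point $\bar x$, which breaks the positive homogeneity that makes the analysis at $0$ tractable and also forces $\bar x$ to carry large integers. Second, SUBSET SUM is only weakly NP-hard, so even a correct gadget would not yield the strong NP-hardness the paper obtains (inputs and weights in $\{-1,0,1\}$). Your ``paired $\ReLU(\pm\cdot)$'' idea does not by itself make the kink at $0$ appear only on the solution set: you still need a mechanism that turns a combinatorial \emph{equality} $\sum_i a_i y_i=T$ into a local non-linearity, and none is given.

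The paper avoids all of this by reducing from 3-SAT rather than SUBSET SUM. A 3-CNF formula $\pi$ on $p$ variables is encoded as a positively homogeneous network $F$: each literal $b_i$ (resp.\ $\lnot b_i$) becomes $x_i$ (resp.\ $-x_i$), each clause becomes $\ReLU\circ\max$ of its three literals, and the outer conjunction becomes a $\min$. Both $\max$ and $\min$ of $2^k$ numbers are expressed as $\ReLU$ networks with $\{-1,0,1\}$ weights. The key structural facts are then immediate: $F\geq 0$, $F(0)=0$, so $0\in\partialc F(0)$ always; and by a short lemma (for positively homogeneous piecewise linear $F$, $\partialc F(0)=\{0\}$ iff $F\equiv 0$), one has $\partialc F(0)\neq\{0\}$ iff $F$ takes a positive value somewhere iff $\pi$ is satisfiable. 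No delicate cancellation of kinks is needed, and the reduction is strongly polynomial.
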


\begin{remark}{\rm 
In Theorem \ref{th:npCompletePrograms}, numerical parameters and inputs are constrained to be in $\{-1,0,1\}$, so that the hardness result does not depend on numerical representation and only involves program size (strong NP-hardness).
See Appendix \ref{sec:proofNPhardness} for more details.}
\end{remark}

The above problems (i)-(ii) enter the field of computational complexity as we consider programs $P \in \mathcal{P}(\mathcal{D}_0)$ with a natural notion of size, given by their cost, $\cost(P)$, the number of operations (recall that we assumed unit costs). 
Since the considered programs implement piecewise linear functions, it follows from \cite[Proposition 2.7]{barton2018computationally} that, our hardness result also holds for the lexicographic subdifferential \cite{nesterov2005lexicographic}, which reduces in this case to the set of neighboring gradients (see Section \ref{sec:proofNPhardness}).

The counterpart of the above problem for AD conservative gradients as in Definition \ref{def:autodiffConservative} is tractable, illustrating a major computational difference between Clarke subdifferential and AD conservative gradient.
The proof is in Section \ref{sec:proofThNPHardConservativeFeasible}, by reduction to a graph shortest path problem.
\begin{proposition}[Finding two elements in autodiff conservative gradients is tractable]
	Given $P \in \mathcal{P}(\mathcal{D}_0)$, with conservative gradient $D_P$ given by Theorem \ref{th:cheapconservative}, finding two elements in $D_P(x)$ at a given input $x$ (or one single element if $D_P(x)$ is a singleton) is solvable in polynomial time.	
				\label{prop:polySolvableAlgo}
\end{proposition}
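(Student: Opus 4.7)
The plan is to give a polynomial-time procedure that either outputs two distinct elements of $D_P(x)$ or certifies that $D_P(x)$ is a singleton, by reducing the search to a reachability/shortest-path question on the reversed computational graph.

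First, I would run the forward pass on $x$, recording the intermediate value $x_i$ at every node, and flag the set $S$ of $\ReLU$ nodes whose recorded input is $0$ (the ``boundary'' nodes). For every non-$\ReLU$ node and for every $\ReLU$ with a non-zero input, the selection in the local conservative gradient is forced ($\pm 1$ for $+,-$, and $1$ or $0$ for $\ReLU$ with non-zero argument); only at nodes of $S$ is there genuine freedom, which we parametrise by $c_i \in [0,1]$. Because ${\mathcal D}_0$ contains no multiplication, an induction along the topological order shows that the backpropagated gradient produced by Algorithm~\ref{alg:autodiff0} is the multilinear read-once polynomial
\[
v[j](c) \;=\; \sum_{\pi \in \Pi_j}\sigma_\pi\prod_{i\in S\cap\pi}c_i,
\]
where $\Pi_j$ is the set of directed paths from the output node $m$ to input node $j$ in the graph $G^\star$ obtained from the reversed computational DAG by deleting every arc into a $\ReLU$ with strictly negative input, and $\sigma_\pi \in \{\pm 1\}$ is the product of signs contributed by $-$ nodes along $\pi$. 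Consequently $D_P(x) = \{v(c) : c \in [0,1]^{|S|}\}$, so $D_P(x)$ is a singleton iff $v$ is a constant polynomial.

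One backward pass with $c = 0$ produces a first element $v^{(0)} := v(0) \in D_P(x)$ at cost $O(\cost(P))$. To decide whether a second, distinct element exists, I would look for a directed path in $G^\star$ from $m$ to some input $j$ that visits at least one arc tagged by a boundary $\ReLU$ and whose associated monomial $\prod_{i \in S \cap \pi}c_i$ has non-vanishing coefficient in $v[j]$. Because each $c_i$ labels exactly one arc of $G^\star$, the read-once structure of $v$ makes it possible, by a dynamic-programming sweep on $G^\star$, to evaluate in polynomial time the signed count of paths that belong to any specified monomial class, so that the existence of a non-zero monomial can be detected by a standard shortest-path / BFS argument. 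If no such witness is found, the algorithm returns $v^{(0)}$ alone; otherwise, one extracts from the witness an assignment $c^\star \in \{0,1\}^{|S|}$ on which $v$ takes a value different from $v(0)$, and rerunning backpropagation with $c^\star$ delivers a second element $v^{(1)} \neq v^{(0)}$, so that the pair $\{v^{(0)}, v^{(1)}\}$ is returned.

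The main technical obstacle is showing that this reduction to shortest-path is faithful despite potential sign cancellations between distinct paths that traverse the same set of boundary $\ReLU$s: this is precisely where the absence of multiplication in ${\mathcal D}_0$ is used. The read-once structure forces every variable $c_i$ to label a single arc, so the coefficient of any monomial $\prod_{i\in S'}c_i$ is a signed path count in an appropriate subgraph of $G^\star$, which is computable in polynomial time by propagating $\pm 1$ weights along the DAG; combined with a careful selection of a shortest witness monomial class, this turns the polynomial-identity test on $v$ into a polynomial-time graph computation, and every step of the procedure runs in time polynomial in $\cost(P)$.
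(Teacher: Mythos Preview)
Your strategy---reduce the singleton test on $D_P(x)$ to a graph-reachability problem on the reversed DAG $G^\star$---matches the paper's high-level approach, and your identification of the ``main technical obstacle'' (sign cancellation between distinct paths that traverse the same set of boundary $\ReLU$s) is exactly right. But the resolution you offer for that obstacle is where the argument breaks down.

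You claim that ``the existence of a non-zero monomial can be detected by a standard shortest-path / BFS argument'' once you know that each individual coefficient is computable in polynomial time. That inference is not valid: there are exponentially many monomials $\prod_{i\in S'}c_i$, and the fact that each coefficient is a signed path count computable by a DP sweep does not, on its own, give you a polynomial-time search over all $S'$. Concretely, take the program $x_2=\ReLU(x_1)$, $x_3=x_1+x_2$, $x_4=x_3-x_2$ at $x_1=0$: there \emph{is} a path in $G^\star$ from $x_4$ to $x_1$ through the boundary arc, yet the coefficient of $c_2$ is $1+(-1)=0$ and $D_P(0)=\{1\}$. Your BFS would find the path, and then you would have to fall back on checking the single degree-$1$ monomial; but with many boundary $\ReLU$s the analogue of this check is an exponential enumeration, and ``careful selection of a shortest witness monomial class'' is not an algorithm.

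What is missing is the observation that turns this into genuine connectivity. Because each $c_i$ labels a \emph{single} arc $(i,a_i)$ of $G^\star$, the coefficient of $\prod_{i\in S'}c_i$ (with $S'=\{i_1>\cdots>i_k\}$ in topological order) \emph{factors} as
\[
[m\to i_1]\cdot[a_{i_1}\to i_2]\cdots[a_{i_k}\to j],
\]
where each bracket is the signed path count in $G^\star$ with \emph{all} boundary arcs removed. This product structure means a monomial has non-zero coefficient iff every factor is non-zero, which is a local edge condition on an auxiliary graph whose vertices are the boundary $\ReLU$s (plus source and sink) and whose edges are the non-zero brackets. Now source--sink connectivity through at least one intermediate vertex decides the question. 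The paper obtains exactly this structure by first rewriting $P$ as a layered $\ReLU$ network (so the autodiff conservative gradient becomes a matrix product $M_1^T(\bar Q_1+Q_1)\cdots M_L^T$ in which each variable is a rank-one diagonal insertion), then building the auxiliary layered graph and invoking Dijkstra; your direct-on-the-DAG route can be completed with the factorisation above, but without stating it your ``shortest-path / BFS'' step is a gap, not a proof.
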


\section{Conclusion}
\label{sec:conclusion}
We extended the ``cheap gradient'' principle to nonsmooth automatic differentiation with a flexible version of Baur-Strassen's result: the overhead ratio of conservative gradients is independent of the dimension. On the other hand, we showed that the potential gain in efficiency of forward AD for multiple directional derivatives is limited due to an intrinsic connection to matrix multiplication. Finally, we have shown that for simple ReLU networks, the enumeration of Clarke subgradients is computationally hard, in contrast to the enumeration of conservative gradients.

The global picture is significantly different from the smooth case, with a well understood ``cheap gradient'' principle that yields ``cheap $p$ directional derivatives'', illustrating the specificities of nonsmoothness. Our results confirm the centrality of conservative gradients in nonsmooth AD and machine learning: they generalize gradients with a clear ``cheap principle'', contrary to concurrent notions. An important open question in this context is the complexity of subgradients, or, in other words, the existence of a ``cheap subgradient principle''. We conjecture a negative answer in general.

\section*{Acknowledgments and Disclosure of Funding}
The authors acknowledge the support of the AI Interdisciplinary Institute ANITI funding under the grant agreement ANR-19-PI3A-0004. The authors acknowledge  the support of the Association nationale de la recherche et de la technologie (ANRT) and Thales LAS France, which contributed to Ryan B's grant. Jérome B. and Edouard P. acknowledge the financial support of Air Force Office of Scientific Research, Air Force Material Command, USAF, under grant numbers FA9550-19-1-7026 FA8655-22-1-7012, and ANR MaSDOL 19-CE23-0017-01. Jérôme B. also acknowledges the support of ANR Chess, grant ANR-17-EURE-0010, TSE-P and the Centre Lagrange.  We thank our collaborators in the Thales LAS France, especially Andrei Purica, for helpful comments. We are grateful to Serge Gratton, Pierre Weiss and Pierre Boudier for useful reference suggestions.

\bibliographystyle{plainnat}
\bibliography{references}

\newpage
\appendix
This is the appendix for ``On the complexity of nonsmooth automatic differentiation''.
\etocdepthtag.toc{mtappendix}
\etocsettagdepth{mtsection}{none}
\etocsettagdepth{mtappendix}{section}
\tableofcontents

\section{Further comments, discussion and technical elements}
\label{sec:furtherComments}
\subsection{Comments on Section \ref{sec:ADcomplexity}}
\label{sec:comADcomplexity}
\subsubsection{Computational model in Section \ref{def:programform}}

\paragraph{DAG representation and examples \ref{def:programform}:} 
We start with a remark regarding representations of programs as directed acyclic graphs and use them to illustrate the model of computation proposed in the main text. It reduces to that of arithmetic circuit complexity for a dictionary composed of elementary arithmetic operations.
\begin{remark}[Programs as directed graphs]{\rm 
\label{rem:DAG}

    A predecessor relation  trivially describes  a directed acyclic graph (DAG). Therefore, a program is equivalently represented as a DAG, nodes corresponding either to input variables (empty predecessor) or computation (nonempty predecessor). Directed edges connect predecessor nodes to their successors. Each computation node contains a lower-level program (with a single output), with the number of input edges being coherent with the number of arguments. The cost of a node is that of the underlying program and the cost of $P$ is the sum of the costs of its nodes. Nodes without outer edges are output nodes. See examples in Appendix \ref{sec:comADcomplexity}.
    }
\end{remark}

We represent programs using the DAG representation as in Remark \ref{rem:DAG}.
Let us define a simple dictionary $\mathcal{D} := \{+, \times\}$ and introduce a level $0$ elementary program $P_0$ such that $P_0(a,b) = a + b$ meaning that $P_0$ computes the quantity $a+b$. $P_0$ is identified with $+$ from the dictionary. We also introduce a level $1$ program $P_1$ such that $P_1(a,b,c) = a\times(b + c)$. We can construct an equivalent level $1$ program, $Q_1$ such that $Q_1(a,b,c) = a\times b  + a\times c$, in this case, we have $P_1 \sim Q_1$, or $[P_1] = [Q_1]$ since they compute the same quantity. The level 2 program $P_2$ is such that $P_2(a,b,c,d) = (a+b)\times(c+d) = Q_1(a,c,d) + P_1(b,c,d)$ and uses level $1$ programs $Q_1$ and $P_1$ in its computation nodes. The Directed Acyclic Graphs (DAGs) representing these programs are given in Figure \ref{fig:DAGprogramExample}.
Assuming $\cost(+) = \cost(\times) = 1$, we have $\cost(P_0) = 1$, $\cost(P_1) = 2$, $\cost(Q_1) = 3$ and $\cost(P_2) = \cost(Q_1) + \cost(P_1) + \cost(\times)  = 6$.

\begin{figure}[H]
\centering
\begin{subfigure}{.18\textwidth}
  \centering
    \begin{tikzpicture}
	\tikzstyle{vnode} = [circle,draw,thick,fill=white,minimum size=6mm]
	\tikzstyle{vedge} = [->,>=latex,thick] 
    \node (a1)[vnode] at (0,0) {$a$};  
    \node (a2)[vnode] at (1,0)  {$b$};
    \node[vnode] (a3) at (0.5,1) {$+$};  
    \node (a4) at (0.5,2) {};  
    \draw (a1)  edge [vedge] (a3); 
    \draw (a2)  edge [vedge] (a3); 
\end{tikzpicture} 
  \caption{$P_0$}
  \label{fig:DAGprogramExample1}
\end{subfigure}%
\begin{subfigure}{.23\textwidth}
  \centering
  \begin{tikzpicture}
	\tikzstyle{vnode} = [circle,draw,thick,fill=white,minimum size=6mm]
	\tikzstyle{vedge} = [->,>=latex,thick] 
	\node (a1)[vnode] at (0,0) {$a$};
    \node (a2)[vnode] at (1,0) {$b$};  
    \node (a3)[vnode] at (2,0)  {$c$};
    \node(a4)[vnode] at (1.5,1) {$+$};  
    \node[vnode] (a5) at (0.75,2) {$\times$};  
    \draw (a2)  edge [vedge] (a4); 
    \draw (a3)  edge [vedge] (a4); 
    \draw (a4)  edge [vedge] (a5); 
    \draw (a1)  edge [vedge] (a5); 
\end{tikzpicture}  
  \caption{$P_1$}
  \label{fig:DAGprogramExample2}
\end{subfigure}
\begin{subfigure}{.23\textwidth}
  \centering
    \begin{tikzpicture}
	\tikzstyle{vnode} = [circle,draw,thick,fill=white,minimum size=2mm]
	\tikzstyle{vedge} = [->,>=latex,thick] 
	\node (a1)[vnode] at (0,0) {$b$};
    \node (a2)[vnode] at (1,0) {$a$};  
    \node (a3)[vnode] at (2,0)  {$c$};
    \node(a4)[vnode] at (0.5,1) {$\times$}; 
    \node(a5)[vnode] at (1.5,1) {$\times$}; 
    \node(a6)[vnode] at (1,2) {$+$};  
    \draw (a1)  edge [vedge] (a4); 
    \draw (a2)  edge [vedge] (a4); 
    \draw (a2)  edge [vedge] (a5); 
    \draw (a3)  edge [vedge] (a5);
    \draw (a4)  edge [vedge] (a6);
    \draw (a5)  edge [vedge] (a6);
\end{tikzpicture} 
  \caption{$Q_1$}
  \label{fig:DAGprogramExample3}
\end{subfigure}
\begin{subfigure}{.3\textwidth}
  \centering
    \begin{tikzpicture}
	\tikzstyle{vnode} = [circle,draw,thick,fill=white,minimum size=2mm]
	\tikzstyle{vedge} = [->,>=latex,thick] 
	\node (a1)[vnode] at (0,0) {$a$};
    \node (a2)[vnode] at (2,0)  {$c$};
    \node (a3)[vnode] at (3,0)  {$d$};
    \node (a4)[vnode] at (1,0) {$b$}; 
    \node(a5)[vnode] at (2.5,1) {$P_1$}; 
    \node(a6)[vnode] at (0.5,1) {$Q_1$}; 
    \node(a7)[vnode] at (1.5,2) {$+$};  
    \draw (a1)  edge [vedge] (a6);
    \draw (a2)  edge [vedge] (a6);
    \draw (a3)  edge [vedge] (a6);
    \draw (a6)  edge [vedge] (a7);
     \draw (a2)  edge [vedge] (a5);
    \draw (a3)  edge [vedge] (a5);
    \draw (a4)  edge [vedge] (a5);
    \draw (a5)  edge [vedge] (a7);
\end{tikzpicture} 
  \caption{$P_2$}
  \label{fig:DAGprogramExample4}
\end{subfigure}

\caption{DAG illustrating different programs with dictionary $\mathcal{D} := \{+, \times\}$. (a) $P_0(a,b) = a+b$, of level $0$ which is identified with $+$ from the dictionary, (b) $P_1(a,b,c) = a (b+c)$, of level $1$, (c) $Q_1(a,b,c) = ab+ac$, of level $1$ and equivalent to $P_1$, (d) $P_2(a,b,c,d) = (a+b) (c+d)= Q_1(a,c,d) + P_1(b,c,d)$, of level $2$.}
 \label{fig:DAGprogramExample}
\end{figure}
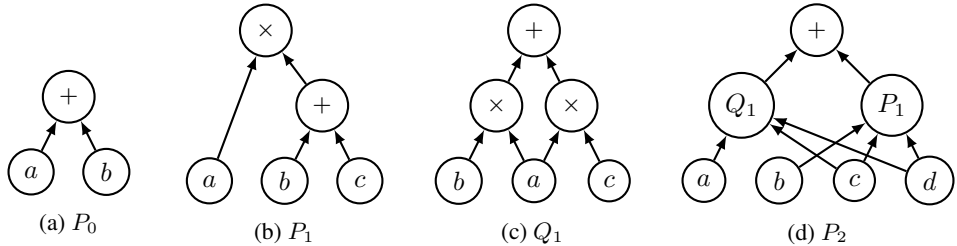

\subsection{Comments on Section \ref{sec:nonsmoothAutodiffHardness}}
\label{sec:comnonsmoothAutodiff}

\subsubsection{Forward AD and Clarke subgradients}
\label{sec:suppKhanBarton}
\cite{nesterov2005lexicographic} introduced the notion of lexicographic subdifferential, denoted here $\partial_L F$ for a Lipschitz function $F\colon \RR^p \to \RR$. The construction of $\partial_L F$ is based on successive local approximations of $F$ with directional derivatives, and one has $\partial_L F(x) \subset \partialc F(x)$ for all $x$ such that the first term is well defined.

It is known that automatic differentiation can be used to compute directional derivatives, particularly the forward mode of automatic differentiation \citep{griewank2008evaluating}. Based on this observation, Khan and Barton developed several algorithms to evaluate elements of $\partialc F$, based on directional derivatives \citep{khan2012evaluating,khan2013evaluating,khan2015vector}. They concentrate on piecewise $C^1$ functions, see for example \cite{scholtes2012introduction}, and propose to handle compositional structures with different restrictions on the function class considered, such as functions in abs-normal forms \citep{khan2012evaluating}, or broader classes \citep{khan2013evaluating,barton2018computationally}.

All these procedures either require to evaluate $p$ directional derivatives \citep{khan2012evaluating,khan2013evaluating}, or rely on forward chain rule propagation for lexicographic derivatives \citep{khan2015vector,barton2018computationally}, which also require to maintain $p$ directional derivatives. For this reason, all these methods suffer from a multiplicative computational overhead ratio of the order of $p$ in the worst case, and it is not known if this could be improved \citep{barton2018computationally}, although efforts have been made in this direction \citep{khan2018branch}.

\subsubsection{Matrix multiplications}
\begin{remark}
\label{rem:tightEstimateDirectionalDerivatives}{\rm The lower bound described in Theorem \ref{th:theoremLowerBoundDirectionalDerivatives} is sharp for a linear ReLU network $F$ as in \eqref{eq:linearReLUNetwork} involving only square $p \times p$ matrices. Indeed, $p$ directional derivatives of $F$ in directions $a_1,\ldots,a_p$, can be computed with roughly $Lc(p)$ operations, using a matrix multiplication algorithm realizing the $c(p)$ bound, for example using the forward mode of AD \cite{khan2012evaluating,khan2013evaluating}. The naive $P_F$ algorithm for forward evaluation performs roughly $2Lp^2$ operations which results in the bound (neglecting terms of order one in numerator and denominator),
    \begin{align*}
        \frac{\comp(F_d,\, \mathcal{D}\cup \{\ReLU,\ReLU'\})}{\cost(P_F) } \leq \frac{c(p)}{2p^2},
    \end{align*}
    for this class of networks, to be compared with \eqref{eq:directionalDerviativesLowerBounds}. Finally, we remark that in the smooth case such complexity estimates reduce to gradient computation which can be done using backward algorithmic differentiation with a constant multiplicative overhead ratio.}
\end{remark}

We denote by $F_d$, the function $F_d\colon (y,A) \mapsto F_1'(0,y,A)$ which computes $p$ directional derivatives at a given point.
Setting $\omega = \lim\sup_{p \to \infty} \log(c(p)) / \log(p)$, since $P'$ is an arbitrary program implementing $F_d$, we have shown that asymptotically, for any $\epsilon > 0$
\begin{align*}
    \sup_{p, F=[P_F], P_F \in \mathcal{P}(\mathcal{D}\cup \{\ReLU\})} \frac{\comp(F_d,\, \mathcal{D}\cup \{\ReLU,\ReLU'\})}{ \cost(P_F) }   \times p^{2 - \omega + \epsilon} = +\infty,
\end{align*}
where the supremum is taken over all $p$ and all functions $F \colon \RR^{p\times q} \to \RR$ implemented by a program $P_F$ with dictionary $\mathcal{D}\cup \{\ReLU\}$. It is not known whether $\omega > 2$.

\section{Proofs related to Section \ref{sec:conservativecomplexity}}
\label{sec:proofsADcomplexity}
\begin{proof}[of  Theorem  \ref{th:cheapconservative}]
Given a program $P$ as in Section \ref{def:programform}, the path differentiability of $[\mathcal{P}]$ is immediate by composition and the chain rule property. The associated conservative gradient $D_P$ is constructed in \cite{bolte2020conservative}.

We have the following cost estimates which can be deduced from the definition of the cost of a program in Section \ref{sec:algorepresentation}.

\textbullet   \: \textbf{Algorithm~\ref{alg:algof}} forward evaluation: 
\begin{align}
    \label{eq:costProgramEvaluation}
    \cost(P) = \cost(\text{Algorithm \ref{alg:algof}}) = \sum_{i=p+1}^{m} \cost\left(g_{i}\right)    
\end{align}

\textbullet   \: \textbf{Algorithm~\ref{alg:algof}} forward evaluation with derivatives:  Algorithm \ref{alg:algof}' with $gd_i$ instead of $g_i$ on line 2
\begin{align}
    \label{eq:costProgramEvaluation'}
    \cost(\text{Algorithm \ref{alg:algof}'}) = \sum_{i=p+1}^{m} \cost\left(gd_i\right) 
\end{align}

\textbullet   \: \textbf{Algorithm~\ref{alg:autodiff0}} backward AD cost:
\begin{align}
    \label{eq:costBackwardDerivative}
    \cost(\backprop(P)) &=\  \cost(\text{Algorithm \ref{alg:algof}'})+\sum_{i=p+1}^{m}|\pr(i)|(\cost(+)+\cost(\times)) \nonumber \\
    &=\sum_{i=p+1}^{m} \cost\left(gd_i\right)+|\pr(i)|(\cost(+)+\cost(\times)).
\end{align}

\textbullet   \: \textbf{Algorithm~\ref{alg:autodiff0}} forward AD cost: 
\begin{align}
    \label{eq:costForwardwardDerivative}   
    \cost(\forprop(P)) &=\  \cost(\text{Algorithm \ref{alg:algof}'}) + \sum_{i=p+1}^{m}p|\pr(i)|\cost(\times)+p(|\pr(i)|-1)\cost(+) \nonumber \\
    &=\sum_{i=p+1}^{m} \cost\left(gd_i\right)+p|\pr(i)|\cost(\times)+p(|\pr(i)|-1)\cost(+).
\end{align}

Let us derive the complexity bound of Algorithm \ref{alg:algof} according to Algorithm \ref{alg:autodiff0}.

\paragraph{Backward AD complexity result: }
Using \eqref{eq:costBackwardDerivative} and the fact that $\cost$ has value in $\RR_+^*$, we have
\begin{align*}
      \cost(\backprop(P)) & = \sum_{i=p+1}^{m} \cost\left(gd_i\right)+|\pr(i)|(\cost(+)+\cost(\times)) \\
      &= \sum_{i=p+1}^{m} \cost (g_i) \times \frac{ \cost\left(gd_i\right)+|\pr(i)|(\cost(+)+\cost(\times))}{\cost(g_i)} \\
      &\leq \max _{i=p+1,m} \left(\frac{ \cost\left(gd_i\right)+|\pr(i)|(\cost(+)+\cost(\times))}{\cost(g_i)}\right) \sum_{i=p+1}^{m} \cost (g_i),
\end{align*}
where the inequality is due to factorization by the maximal value. Using \eqref{eq:costProgramEvaluation}, we obtain
\begin{align*}
    \cost(\backprop(P))  &\leq \omega_{b} \times \cost(P)
\end{align*}
where $\omega_b$ is given in \eqref{eq:costBackProp}. This proves point (i).

\paragraph{Forward AD complexity result: }
Using \eqref{eq:costForwardwardDerivative} and the fact that $\cost$ has value in $\RR_+^*$, we have
\begin{align*}
    \cost(\forprop(P)) & = \sum_{i=p+1}^{m} \cost\left(gd_i\right)+p|\pr(i)|\cost(\times)+p(|\pr(i)|-1)\cost(+) \\
    &= \sum_{i=p+1}^{m} \cost (g_i) \times \frac{\cost\left(gd_i\right)+p|\pr(i)|\cost(\times)+p(|\pr(i)|-1)\cost(+)}{\cost(g_i)} \\
    &\leq \max _{i=p+1,m} \left( \frac{\cost\left(gd_i\right)+p|\pr(i)|\cost(\times)+p(|\pr(i)|-1)\cost(+)}{\cost(g_i)}\right)  \times \\
    & \hspace*{2mm}\sum_{i=p+1}^{m}\cost (g_i),
\end{align*}
where the inequality is due to factorization by the maximal value. Using \eqref{eq:costProgramEvaluation}, we obtain
\begin{align*}
    \cost(\forprop(P))  &\leq \omega_{f} \times \cost(P)
\end{align*}
where $\omega_f$ is given in \eqref{eq:costBackProp}.

\end{proof}

\subsection{Justification of the complexity Table \ref{ex:tablecomplexity} of the \texorpdfstring{$\mathcal{D}_{\ReLU}$}{DReLU}-Dictionary.}
\label{a:justificationtablecomplexity}
The proof of Corollary \ref{cor:ReLUAD} follows from Theorem \ref{th:cheapconservative} by computing the relevant constants. They are shown in Table \ref{ex:tablecomplexity}, let us justify the proposed numbers.
\begin{case}[$\cost(\times), \cost(+)$]
Let us define $g(a,b) = a \times b$. To evaluate $g$, we need one operation from $\mathcal{D_{\ReLU}}$. The derived program $d$ related to $g$, should satisfy $d(a,b) = (b,a)$ which does not require additional operation. Therefore, from Assumption \ref{ass:unitcost} we can deduce that $\cost(g) = 1$ and $\cost(gd) = 1 $. We get the same result for $\cost(+)$ by applying identical reasoning. 
\end{case}

\begin{case}[$\cost(\times c), \cost(+ c)$]
Let us define $g(a) = c \times a$. To evaluate $g$, we need one operation from $\mathcal{D_{\ReLU}}$. The derived program $d$ related to $g$, should satisfy $d(a) = c$ which does not require additional operation from $\mathcal{D'_{\ReLU}}$. Therefore, from Assumption \ref{ass:unitcost} we can deduce that $\cost(g) = 1$ and $\cost(gd) = 1 $. We get the same result for $\cost(+ c)$ by applying identical reasoning. 
\end{case}

\begin{case}[$\cost(\log)$]
Let us define $g(a) = \log(a)$. To evaluate $g$, we need one operation from $\mathcal{D_{\ReLU}}$. The derived program $d$ related to $g$, should satisfy $d(a) = 1/a$, which requires the inverse operation from $\mathcal{D'_{\ReLU}}$. Therefore, from Assumption \ref{ass:unitcost} we can deduce that $\cost(g) = 1$ and $\cost(gd) = 2 $. 
\end{case}

\begin{case}[$\cost(\exp)$]
Let us define $g(a) = \exp(a)$. To evaluate $g$, we need one operation from $\mathcal{D_{\ReLU}}$. The derived program $d$ related to $g$, should satisfy $d(a) = g(a)$ which does not require operation from $\mathcal{D'_{\ReLU}}$. Finally, from Assumption \ref{ass:unitcost} we can deduce that $\cost(g) = 1$ and $\cost(gd) = 1 $. 
\end{case}

\begin{case}[$\cost(inv)$]
Let us define $g(a) = \frac{1}{a}$. To evaluate $g$, we need one operation from $\mathcal{D_{\ReLU}}$. The derived program $d$ related to $g$, should satisfy $d(a) = \frac{-1}{a^2}$ which requires one additional multiplication to compute the square and one $(-1)$ multiplication operation from $\mathcal{D'_{\ReLU}}$. Finally, from Assumption \ref{ass:unitcost} we can deduce that $\cost(g) = 1$ and $\cost(gd) = 3$. 
\end{case}

\begin{case}[$\cost(\ReLU)$]
Let us define $g(x) = \ReLU(x) = \text{max}(x,0)$. To evaluate $g$, we need to evaluate the sign of $x$. The derived program $\ReLU'$ can be computed also from the sign of $x$ without further operation. We have $\cost(g) = 1$ by hypothesis, but it is also reasonable to consider $\cost(gd) = 1$ as both operations only require sign evaluation of the same object. 
\end{case}

\begin{remark}
{\rm
 Since $\mathcal{D_{\ReLU}}$ dictionary contains the $\ReLU$ function, we can build other non-smooth functions such as the maximum and the absolute value. For example, $\max\{x,y\} = \ReLU(x-y) + y = \ReLU(x-y) + \ReLU(y) - \ReLU(-y)$.}
\end{remark}

\subsection{An extension of Table \ref{ex:tablecomplexity}}
\label{sec:generalCost}
The justifications of the following are similar to Section \ref{a:justificationtablecomplexity}, simply taking into consideration different types of operations. Taking $c_{\mathrm{nonlin}} = c_\ReLU = 1$, we recover table \ref{ex:tablecomplexity}. We replace $\ReLU$ by $\times \ReLU$ which corresponds to its usage in practice and allows us to balance the cost of ReLU operations and that of multiplications.
\begin{table}[ht]
\caption{Extension of cost table. $c_\mathrm{nonlin}\geq 1$ is the cost of nonlinear operations and $c_\ReLU \geq 0$ is the cost of sign evaluation for $\ReLU$ or $\ReLU'$.}
\begin{center}
\newpage
\begin{tabular}{|c|c|c|c|c|c|c|c|c|c|}
\hline
$g$  & $(+,\times)$ & $(+ c, \times c)$& $\log$ & $\exp$ & inv & $ \times \ReLU$ \\ \hline
$\cost(g)$ &  $1$  &  $1$  &  $c_\mathrm{nonlin}$    & $c_\mathrm{nonlin}$    & $c_\mathrm{nonlin}$   &  $1+c_\ReLU$  \\  \hline
$|\pr|$ &  $2$  &  $1$  &  $1$    & $1$    & $1$   &  $2$  \\  \hline
$\cost(gd)$ &  $1$ &  $1$  &  $2c_\mathrm{nonlin}$    & $c_\mathrm{nonlin}$    & $c_\mathrm{nonlin} + 2$   &  $1 + c_\ReLU$  \\ \hline
$\displaystyle \frac{\cost(gd)}{\cost\left(g\right)}$&  $1$  &  $1$ &  $2$    & $1$    & $\frac{c_\mathrm{nonlin}+2}{c_\mathrm{nonlin}}$   &  1  \\ \hline
$\displaystyle \frac{\cost(\times)|\pr| }{\cost\left(g\right)}$&  $4$  &  $2$ &  $\frac{1}{c_\mathrm{nonlin}}$    & $\frac{1}{c_\mathrm{nonlin}}$    & $\frac{1}{c_\mathrm{nonlin}}$   &  $\frac{2}{1+c_\ReLU}$  \\ \hline
$\displaystyle \frac{\cost(gd)  + 2 \cost(\times)|\pr| }{\cost\left(g\right)}$&  $5$  &  $3$ &  $\leq 4$    & $\leq 3$    & $\leq 5$   &  $\leq 5$  \\ \hline
\end{tabular}
\end{center}
\label{ex:tablecomplexityBis}
\end{table}

The justification is the same as in Section \ref{a:justificationtablecomplexity} taking into consideration different types of operations. For the $\times \ReLU$ operation, the justification is as follows.
\begin{case}[$\times \cost(\ReLU)$]
The operation has two argument and requires one sign evaluation and one multiplication in the worst case, so we assign it the cost $1 + c_\ReLU$. The differentiated program $d$ should compute the function $(a,b) \mapsto (\ReLU(b), a \times \ReLU'(b))$. One can write a program to compute jointly $g$ and $d$ as follows: return $(a\times b, b, a)$ if $b \geq 0$ and $(0, 0, 0)$ if $b < 0$. This only requires a bit sign check which cost is $c_\ReLU$ and a multiplication. We therefore model this operation such that $\cost(gd) = \cost(g)= 1 + c_\ReLU$. 
\end{case}
Further refinements could be considered including various type of computational operations, such as memory moves, these are beyond the scope of the present paper.

\subsection{Additional elementary nonsmooth programs and cost examples}
\label{sec:additionalCost}
For simplicity, we do not discuss the dictionary and its related derived dictionary as there are many possibilities, one of them being $\mathcal{D}_\ReLU$ and $\mathcal{D}_\ReLU'$ as all the considered operations can be equivalently expressed with $\ReLU$. 
We use the same framework as in \ref{sec:generalCost} and we identify the cost of comparing two real numbers with $c_\ReLU > 0$. For each program $g$ and associated derived program $d$, we let
\begin{align*}
    \omega = \displaystyle \frac{\cost(gd)  + 2 \cost(\times)|\pr| }{\cost\left(g\right)}
\end{align*}
\begin{table}[ht]
\caption{Extension of cost table. $c_\mathrm{nonlin}\geq 1$ is the cost of nonlinear operations and $c_\ReLU \geq 0$ is the cost of sign evaluation for $\ReLU$ or $\ReLU'$. For simplicity $c_\ReLU$ is abbreviated $c_\mathrm{R}$ and $c_\mathrm{nonlin}$ is abbreviated $c_\mathrm{nl}$}
\begin{center}
\newpage
\begin{tabular}{|c|c|c|c|c|c|c|c|c|}
\hline
$g$  & $(+,\times)$ &  $|\cdot|$ &ELU&$3\times 3$-max-pool&$\|\cdot\|_{\infty}$ & $\|\cdot\|_1$  \\ \hline
$\cost(g)$ &  $1$  &  $1+c_\mathrm{R}$  &  $2 +  c_\mathrm{R} + c_\mathrm{nl}$    & $153+8c_\mathrm{R}$    & $n+2nc_\mathrm{R}-1$   &  $n(2+c_\mathrm{R}) - 1$  \\  \hline
$|\pr|$ &  $2$  &  $1$  &  $1$    & 9    & $n$   &  $n$  \\  \hline
$\cost(d , g)$ &  $1$ &  $1+c_\mathrm{R}$  & $2 +  c_\mathrm{R} + c_\mathrm{nl}$    & $153+8c_\mathrm{R}$    & $n+2nc_\mathrm{R}-1$   &  $n(2+c_\mathrm{R}) - 1$ \\ \hline
$\displaystyle \frac{\cost(gd)}{\cost\left(g\right)}$&  $1$  &  $1$ &  $1$    & $1$    & $1$   &  1  \\ \hline
$\displaystyle \frac{\cost(\times)|\pr| }{\cost\left(g\right)}$&  $4$  &  $\frac{1}{1+c_\mathrm{R}}$ &  $\frac{1}{2 +  c_\mathrm{R} + c_\mathrm{nl}}$    & $\frac{9}{153+8c_\mathrm{R}}$    & $\frac{n}{n+2nc_\mathrm{R}-1}$   &  $\frac{n}{n(2+c_\mathrm{R}) - 1}$  \\ \hline
$\omega$&  $5$  &  $\leq 3$ &  $\leq 2$    & $\leq 1.12$    & $\leq 3$   &  $\leq 2$   \\ \hline
\end{tabular}
\end{center}
\label{ex:tablecomplexityTer}
\end{table}

\begin{case}[Absolute value and Leaky-ReLU]
Recall that $|x| = x$ if $x>0$ and $-x$ otherwise. Similarly Leaky-$\ReLU(x) = x$ if $x>0$ and $ax$ otherwise, for some parameter $a \in (0,1)$ so that both cases are exactly the same. The reasoning and result are exactly the same for both operations so we treat the absolute value. The construction is similar as what was proposed for $\times \cost(\ReLU)$ treated in the previous section. 

Let $g$ be a program to evaluate $|\cdot|$, in the worst case it requires one sign evaluation and one multiplication so that $\cost(g) = 1 + c_\ReLU$. Similarly it is possible to built a program which returns $(x,1)$ if $x>0$ and $(-x,-1)$ otherwise, this computes $(gd)$ and require the exact same operations so that $\cost(gd) = \cost(g) = 1 + c_\ReLU$.
\end{case}

\begin{case}[ELU]
\begin{align*}
f(x) =\left\{\begin{array}{lll} x & \text { if } & x \geq 0 \\ a(e^{x} - 1) & \text { if } & x < 0\end{array}\right. \text{with } a > 0 .
\end{align*}
Let $g$ be a program to evaluate the ELU function, it requires a sign evaluation and in the worst case one nonlinear operation to evaluate $e^x$, one multiplication to evaluate $ae^x$, and one substraction to evaluate $ae^x -a$. Therefore, $\cost(g) = c_\ReLU + c_\mathrm{nonlin} + 2$. The derived program $d$ requires the same sign and returns $1$ or $ae^x$ depending on the sign. This does not require additional operation and therefore the joint computation of $g$ and $d$ satisfies $\cost(gd) = \cost(g)$. 
\end{case}

\begin{case}[max-$m$-linear]
Set $n$ a number of inputs and $m \geq 2$ a number of linear functions which are parameters, represented by a matrix $A$ and a fixed input vector of size $n$ represented by $x \in \R^n$. Setting $\max_m \colon \RR^m$ to $\RR$ the function which evaluates the maximum of $m$ numbers, we consider $g$ a program which evaluates the function $A \mapsto \max_m(Ax)$. Recall that $x$ is fixed so that the number of inputs is $m \times n$. The multiplication requires $m \times (2n-1)$ multiplications and additions and the evaluation of $\max_m$ requires $(m-1) c_\ReLU$  as it requires $m-1$ pairwise comparisons. We therefore have $\cost(g) = m \times (2n-1) + (m-1) c_\ReLU$. 

As for the derived program $d$, setting $M_i = 0$ except for row number $i$ which attains the maximum in $g$ which is set to $x$, we have an element of a conservative gradient for $g$. It is possible to jointly compute $g(A)$ and $d(A)$ by invoking a program which returns $((Ax)[i], M_i)$ where $i$ is any index realizing the max and $M_i$ is as discussed. This does not require more operations and we have therefore $\cost(gd) = \cost(g) = m \times2n-1 + (m-1) c_\ReLU$
\end{case}

\begin{case}[Two dimensional max-pooling ($3\times 3$-max-pool)]
We consider a kernel of size $3 \times 3$ for simplicity. The goal is to differentiate with respect to the kernel weights for a fixed input. Let $g$ denote a program implementing such a function, it is of the same form as max-$m$-linear except that the matrix $A$ is of size $9 \times 25$ (padding values at the boundary of the $3\times 3$ patch, this gives $5 \times 5 = 25$ inputs and $9$ outputs), but it is sparse and can be parametrized by only $9$ values, and the evaluation of the linear function for a fixed $5 \times 5$ input only requires $9 \times (9+8) = 153$ addition and multiplications. We then take the maximum of these $9$ outputs so that and $\cost(g) = 153 + 8 c_{\mathrm{nonlin}}$. For the same reason as max-$m$-linear, we have $\cost(gd) = \cost(g) = 153 + 8 c_{\mathrm{nonlin}}$.
\end{case}

\begin{case}[$l_1$-norm, $\|\cdot\|_\infty$]
Denote by $g$ a program which evaluate the $l_1$ norm on $\RR^n$. It has $n$ inputs. In the worst case, its evaluation can be done with $n-1$ addition, $n$ multiplication by $-1$ and $n$ pairwise comparisons. Therefore we have $\cost(g) =2 n + nc_\ReLU - 1$. For the same reasons as all examples before, it is possible to identify a derived program $d$ without requiring additional operation so that $\cost(gd) = \cost(g) = 2n + n c_\ReLU - 1$.
\end{case}

\begin{case}[Median of $n$ numbers]
Denote by $g$ a program that evaluates the median of $n$ numbers. This can be done by sorting the $n$ numbers and outputting the value corresponding to $\lfloor \frac{n}{2}\rfloor$, which requires roughly $n \log(n)$ operations, depending on the algorithm used. The sorting operation is a permutation, one could apply the same permutation to the vector $(1,2,\ldots, n)$ without additional operation required. The number at position $\lfloor \frac{n}{2}\rfloor$, call it $i$, is the index of the value associated with the median. Setting $d$ to be the null vector in $\RR^n$ with value $1$ at position $i$ only, we have a selection in a conservative gradient for the median with no additional operation required. Therefore in this case $\cost(g) = \cost(gd)$.
\end{case}

\begin{case}[Selection functions]
This example encompasses virtually all examples used in machine learning and extends the median example above. Assume that $f \colon \RR^p \to \RR$ is locally Lipschitz, given in the form
\begin{align*}
    f(x) = f_{s(x)}(x)
\end{align*}
where $s \colon \RR^p \to \{1,\ldots, m\}$ is an index selection function, and for each $i = 1,\ldots, m$, $f_i \colon \RR^p \to \RR$ is a $C^2$ function. Let $g$ be a program computing $f$, one possibility is to first evaluate $s(x)$ at cost $c_s$ and then evaluate $f_{s(x)} (x)$ at cost $c_f$. As shown in \cite{bolte2020mathematical}, under very mild restrictions on $s$ and $f$ (which should be expressed with logarithms, polynomials, exponentials etc ...), the function
\begin{align*}
    x \mapsto \nabla_{s(x)} f(x)
\end{align*}
is a conservative gradient for $f$. It can be seen that it is possible to evaluate jointly $(g,gd)$ by first computing $s$, at a cost $c_s$, then evaluate $f_s$ and $\nabla f_s$ jointly at a cost $c_\nabla$.
\begin{align*}
    \cost(g) &= c_s + c_f \\
    \cost(gd) &= c_s + c_\nabla\\
    \frac{\cost(gd)}{\cost(g)} &= \frac{c_s +c_\nabla}{c_s + c_f} \leq \frac{c_s +5c_f}{c_s + c_f}
\end{align*}
where we used $c_\nabla \leq 5 c_f$, the cheap gradient principle for smooth programs. This ratio is close to $5$ if $c_s$ is negligible, we recover the usual ratio for smooth programs. It is close to $1$ if $c_s$ dominates, which is the case in the median example where $f_s$ just corresponds to coordinate number $s$ of the input and has a constant derivative.

\end{case}

\section{Proofs of Section \ref{sec:directionalOverhead}}
\label{sec:proofNonsmoothAutodiff}

\subsection{Proof of the main result}
\begin{proof}[of Theorem \ref{th:theoremLowerBoundDirectionalDerivatives}]
    Let $U \in \RR^{p \times p}$ be an orthogonal matrix with entries in $\{-1,1\}$ which columns are denoted by $u_1,\ldots, u_p$ (with squared norm $p$). Assume that we have as variables a matrix $M \in \RR^{p \times p}$ and two matrices $A,B \in \RR^{p \times p}$ with columns $a_1,\ldots, a_p$ and $b_1,\ldots, b_p$ respectively.
    
    Consider the function 
    \begin{align*}
        F \colon (x,B,M) \mapsto \frac{1}{p} \sum_{i=1}^p |[U B^T M x]_i|.
    \end{align*}
    The pair $(M,B)$ will be identified as $y$ in the statement of the theorem.
    Considering the dictionary of elementary functions $\{+,\times, \ReLU,+c, \times c\}$, $F$ has a representation as a program $P_F$ using the identity $|t| = \ReLU(t) + \ReLU(-t)$ for all $t \in \RR$.
    We may construct $P_F$ such that $\cost(P_F) = 6p^2 + 2p  \leq 8p^2$ where we count $2p^2 - p$ operation for each matrix vector multiplication to evaluate $UB^TMx$ (there are three of them), $p$ multiplication by $-1$ to evaluate $-UB^TMx$ , $2p$ application of $\ReLU$ (on $UB^TMx$ and $-UB^TMx$), $p$ additions of $\ReLU$ outputs to evaluate $p$ applications of the absolute value, $p-1$ for the outer sum and $1$ for the division.
    Now consider the constraints
    \begin{align}
        \sign(UB^TM a_i) = u_i,\quad i = 1,\ldots p.
        \label{eq:directionalDerivativesConstraints}
    \end{align}
    The set of matrices $A,B,M$ satisfying this constraint is an open set, call it $S$. We now restrict our attention to this open set and argue that $\cost(P')$ does not change if the input variables are constrained to be in $S$.
    
    We have for all $i=1,\ldots, p$ and $(A,B,M) \in S$, the following directional derivatives with respect to variable $x$
    \begin{align*}
        F_{ 1}'(0,B, M,a_i) = \frac{1}{p}\sign(UB^TM a_i)^T UB^TMa_i = \frac{1}{p} u_i^T  UB^TMa_i = b_i^TMa_i.
    \end{align*}
    Setting the function $G \colon (A,B,M) \mapsto \sum_{i=1}^p F_1'(0,B,M, a_i) = \mathrm{Tr}(MAB^T)$, we have that $G$ is a polynomial and $\nabla_M G(A,B,M) = \sum_{i=1}^p b_i a_i^T = BA^T$. Note that this does not depend on $M$. 
    
    Fix $P'$ any program implementing the directional derivatives function $(y,A) \mapsto F_1'(0,y,A)$ of $F$ described above, with dictionary $\{+,\times, \ReLU, \ReLU',+c, \times c\}$, as in the statement of the theorem.
    \begin{claim}
        There is a program $P_2$ on dictionary $\mathcal{D} = \{+,\times, +c, \times c\}$ such that $G = [P_2]$ (on the whole space) and $\cost(P_2) \leq \cost(P') + p$.
    \end{claim}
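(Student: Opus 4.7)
I would construct $P_2$ in two stages: first produce a polynomial program $P'_{\mathrm{lin}}$ on $\mathcal{D}$ that agrees with $P'$ on an open neighborhood of a reference point in $S$, then append $p-1$ additions to sum its $p$ outputs. A polynomial-rigidity argument will then promote the local agreement to a global identity $[P_2]=G$ on the whole space.

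\textbf{Key steps.} I would fix any $(A_0,B_0,M_0)\in S$ and evaluate $P'$ at it, recording the sign of every $\ReLU$ and $\ReLU'$ input. Then I would build $P'_{\mathrm{lin}}$ by replacing each such node with its locally constant behaviour: an $\ReLU$ becomes the identity (a wire) when its recorded input is positive and the constant $0$ otherwise; an $\ReLU'$ becomes the constant $1$ or $0$ accordingly. Constants can be realized using $\times c$ with $c=0$ and, when needed, a subsequent $+c$ with $c=1$. Since the recorded signs are locally constant on some open $U\subset S$ containing the reference point, $P'_{\mathrm{lin}}$ and $P'$ compute the same function on $U$, and on $S$ this function equals $(b_i^T M a_i)_{i=1}^p$ by the directional-derivative computation carried out earlier. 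Because $P'_{\mathrm{lin}}$ uses only $\{+,\times,+c,\times c\}$, it computes a polynomial function, and two polynomials agreeing on the open set $U$ are identical, so $P'_{\mathrm{lin}}$ computes $(b_i^T M a_i)_{i=1}^p$ globally. Defining $P_2$ as $P'_{\mathrm{lin}}$ followed by $p-1$ additions gives $[P_2]=\sum_i b_i^T M a_i = \mathrm{Tr}(MAB^T) = G$ on the whole space.

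\textbf{Cost and main obstacle.} On the cost side, identity replacements strictly reduce cost, $\times 0$ replacements match the unit cost of the original $\ReLU$ or $\ReLU'$, and the only overhead appears when realizing the constant $1$ for active $\ReLU'$ nodes via $+c\circ\times c$. Combined with the $p-1$ outer additions, the total will satisfy $\cost(P_2)\le \cost(P')+p$. The conceptually clean step is the polynomial-identity move, which converts local equality on $U$ into a global identity essentially for free; the delicate bookkeeping step is the per-node cost accounting for the constant synthesis, which is where the slack in the additive $p$ is spent.
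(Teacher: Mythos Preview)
Your overall strategy—freeze the $\ReLU/\ReLU'$ nodes at a reference point and then invoke polynomial rigidity to globalize—matches the paper's, but two steps are not justified as written.

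First, you cannot take ``any'' $(A_0,B_0,M_0)\in S$. If some $\ReLU$ or $\ReLU'$ argument vanishes there, the sign pattern is not locally constant and $P'_{\mathrm{lin}}$ will fail to coincide with $P'$ on any neighbourhood of that point, so the polynomial-identity step never gets off the ground. The paper handles this with a semialgebraicity argument: each node of $P'$ computes a semialgebraic function and $\ReLU'$ takes finitely many values, hence there is a dense open semialgebraic set on which all $\ReLU'$ outputs are locally constant; one shrinks $S$ accordingly before linearizing. This is a minor repair, but it must be made.

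Second, and more seriously, your cost bookkeeping does not close. Each active $\ReLU'$ realized via $(+1)\circ(\times 0)$ costs two operations in place of one, and nothing in the argument bounds the number of such nodes: $P'$ is an \emph{arbitrary} program on $\{+,\times,\ReLU,\ReLU',+c,\times c\}$ and may contain, say, $p^{3}$ active $\ReLU'$ nodes and no $\ReLU$ nodes at all, so the identity-wire savings cannot be relied upon to compensate. Your net overhead is $(p-1)+(\#\text{active }\ReLU')-(\#\text{active }\ReLU)$, which is not $\le p$ in general. The paper proceeds differently: it first replaces each $\ReLU'$ node by its constant value and absorbs that constant into a successor (a binary $+$ or $\times$ with one constant argument becomes a unary $+c$ or $\times c$), which never increases the node count and yields a program on $\mathcal{D}\cup\{\ReLU\}$; then a dedicated lemma replaces each remaining $\ReLU$ by a single $\times c$ with $c\in\{0,1\}$, again at equal cost. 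Only the outer additions remain as overhead, and the paper uses the loose bound $p$ rather than $p-1$ for these—this is where the additive $p$ is actually spent, not on constant synthesis.
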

    
    We use the DAG representation of programs as in Remark \ref{rem:DAG}. Therefore $P'$ is described by a DAG which node are either input nodes or computation nodes implementing functions from $\mathcal{D}'_\ReLU$. We will modify the program by simple modifications of the computation nodes.
    We may obtain a program $P_0$ implementing $G$ on $S$ with dictionary $\mathcal{D}'_\ReLU$ with $\cost(P_0) \leq \cost(P') + p$ by summing the outputs of $P'$. The $\ReLU'$ nodes in $P_0$ represent a semialgebraic function \cite{coste2000introduction,coste2002introduction} with values in a finite set. Therefore, there is a dense open semialgebraic set on which all $\ReLU'$ nodes in $P_0$ are locally constant \cite[Theorem 6.7]{coste2000introduction}. Reducing $S$ if necessary, we obtain a program $P_1$ on dictionary $\mathcal{D}_\ReLU$ such that $P_1 \sim P_0$ on $S$ by replacing each $\ReLU'$ node in $P_0$ by the corresponding constants. We have $\cost(P_1) \leq \cost(P_0)$ (we replace computing nodes by constants). By Lemma \ref{lem:constantComplexity}, there is a program $P_2$ on $\mathcal{D}$ such that $\cost(P_2) = \cost(P_1) \leq \cost(P_0) \leq \cost(P') + p$ and $G = [P_2]$ (on the whole space). This proves the claim.
    
    We may obtain a program $D_2$ implementing $\nabla_M G$ with dictionary $\mathcal{D}$ by backward algorithmic differentiation on $P_2$, that is $D_2 = \backprop(P_2)$.  we have therefore
    \begin{align*}
        \comp(BA^T, \mathcal{D}) &\leq \cost(D_2)  \\
        &\leq\cost(P_2,D_2) \\
        &\leq 5 \cost(P_2)  \\
        &\leq 5p + 5\cost(P'),
    \end{align*}
    where the first inequality is because $D_2$ is a program computing $BA^T$ for all $A,B$ on dictionary $\mathcal{D}$, the second is because adding computation increases the cost, the third is a property of backward algorithmic differentiation on $\mathcal{D}$ and the last one is by construction of $P_2$.
    Note that $\comp(BA^T,\mathcal{D}) = c(p)$ by definition, therefore we have the claimed lower bound
    \begin{align*}
        \frac{\cost(P')}{\cost(P_F)} &\geq \frac{c(p) - 5p}{5\cost(P_F) } = \frac{c(p)-5p}{8p^2}.
    \end{align*}
\end{proof}
\subsection{An additional Lemma}
\begin{lemma}
    Let $Q \colon \RR^p \to \RR$ be a polynomial and $P_1$ be a program (without loss of generality of level 1) on the dictionary $\mathcal{D}_1 = \{ + ,  \times ,\ReLU,  +c,  \times c\}$, such that $Q = [P_1]$ for all inputs restricted to an open set $S \subset \RR^p$. Then there is a level 1 program $P_2$ on the dictionary $\mathcal{D} = \mathcal{D}_1 \setminus \{\ReLU\} = \{+,\times, +c, \times c\}$ such that $Q = [P_2]$ (for all inputs in $\RR^p$). Furthermore, if $\cost(\ReLU) = \cost(\times c)$, then, $\cost(P_2) = \cost(P_1)$.
    \label{lem:constantComplexity}
\end{lemma}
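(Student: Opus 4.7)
The plan is to exploit two ingredients: (i) the $\ReLU$ map is locally either the identity or the zero function on the open sets where its argument has a constant sign, and (ii) two polynomials that agree on a nonempty open subset of $\RR^p$ coincide on all of $\RR^p$. So the strategy is to shrink $S$ to a smaller nonempty open subset $S'$ on which each $\ReLU$ node in $P_1$ reduces to an affine operation, and then to replace each such node by a $\times c$ operation (with $c=0$ or $c=1$). This produces a program $P_2$ on $\mathcal{D}=\{+,\times,+c,\times c\}$ computing some polynomial $\widetilde Q$ on $\RR^p$; because $P_1=P_2$ on $S'$ and both compute polynomials there ($P_1$ computes $Q$ by hypothesis), polynomial identity forces $\widetilde Q=Q$ on $\RR^p$.

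The key step is the iterative sign resolution. Enumerate the $\ReLU$ nodes of $P_1$ in a topological order $r_1,\ldots,r_k$; each $r_i$ takes as input a quantity $h_i(x)$ depending continuously on $x$ and on the outputs of earlier nodes. Start with $S_0:=S$ and proceed inductively: given the nonempty open set $S_{i-1}\subset S$, consider the open subsets $A_+=\{x\in S_{i-1}:h_i(x)>0\}$ and $A_-=\{x\in S_{i-1}:h_i(x)<0\}$.
\begin{itemize}
\item If $A_+\neq\emptyset$, set $S_i:=A_+$ and replace the node $r_i=\ReLU(h_i)$ by $\times 1$ applied to $h_i$ (i.e.\ rewire the DAG so that the output of $r_i$'s predecessor is multiplied by the constant $1$).
\item Otherwise $h_i\le 0$ on $S_{i-1}$; if $A_-\neq\emptyset$, set $S_i:=A_-$, and if $A_-=\emptyset$ then $h_i\equiv 0$ on $S_{i-1}$ and we set $S_i:=S_{i-1}$. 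In either case $\ReLU(h_i)\equiv 0$ on $S_i$, so replace $r_i$ by $\times 0$.
\end{itemize}
At each step $S_i$ is open and nonempty, so the induction produces a nonempty open set $S':=S_k$ and a modified program $P_2$ on $\mathcal{D}_1$ whose $\ReLU$ nodes have all been replaced by $\times c$ operations ($c\in\{0,1\}$). Hence $P_2\in\mathcal{P}^1(\mathcal{D})$, and under the assumption $\cost(\ReLU)=\cost(\times c)$ we have $\cost(P_2)=\cost(P_1)$ because the cost of every other node is untouched.

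Finally, since $P_2$ only uses $+,\times,+c,\times c$, the function $\widetilde Q:=[P_2]$ is a polynomial on $\RR^p$. By construction $[P_2](x)=[P_1](x)=Q(x)$ for every $x\in S'$, so $\widetilde Q-Q$ is a polynomial vanishing on the nonempty open set $S'$, which forces $\widetilde Q=Q$ on all of $\RR^p$. This yields the required $P_2$.

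The main obstacle is the bookkeeping: one must verify that at each step $S_i$ really is nonempty (handled by treating the $A_+=A_-=\emptyset$ case separately, where $h_i$ is identically zero) and that the DAG rewrite is well-defined, i.e.\ one must argue that each $\ReLU$ node may be replaced in place by a unary operation ($\times 1$ or $\times 0$) acting on its existing predecessor, without disturbing downstream nodes — here the ``level $1$'' assumption is convenient since nodes are elementary operations from the dictionary and can be swapped pointwise.
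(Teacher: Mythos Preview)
Your argument is correct and complete. The iterative sign-resolution works: at each step $S_i$ is open and nonempty, on $S'=S_k$ every $\ReLU$ node's input has a fixed sign (or is identically zero), and an induction along the DAG shows $[P_2]=[P_1]$ on $S'$; polynomial identity then globalises. The invariant you need --- that on $S_{i-1}$ the partially modified program agrees with $P_1$ at every node, so that your $h_i$ coincides with the input to $r_i$ in $P_1$ --- is implicit but straightforward; you flag this bookkeeping yourself.

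Your route differs from the paper's in one interesting respect. The paper argues globally: the map $G(x)=(\ReLU'(h_1(x)),\ldots,\ReLU'(h_N(x)))$ from $\RR^p$ to $\{0,1\}^N$ is semialgebraic (each node of $P_1$ computes a semialgebraic function), and a semialgebraic function with finitely many values is locally constant on a dense open set, so one finds $S'\subset S$ on which \emph{all} sign patterns are simultaneously fixed and performs the substitution in one shot. You replace this appeal to semialgebraic stratification by an elementary induction that only uses continuity of the node functions and the trichotomy of the sign. Your approach is more self-contained and avoids importing o-minimal/semialgebraic machinery; the paper's approach is shorter once that machinery is granted and makes clearer why no qualification on the zero set of $h_i$ is ever needed (it is automatically handled by the finite-range argument). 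Both reach the identical replacement $\ReLU \rightsquigarrow \times c$ with $c\in\{0,1\}$ and the same cost conclusion.
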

\begin{proof}
    We use the DAG representation of programs as in Remark \ref{rem:DAG}. Therefore $P_1$ is described by a DAG which node are either input nodes or computation nodes implementing functions from $\mathcal{D}_1$.
    The function computed by $P_1$ as well as each of its nodes are semi-algebraic \cite{bochnak2013real,coste2000introduction,coste2002introduction}. For each $\ReLU$ node in the graph representing $P_1$ (assume that there are $N$ of them) we associate a number: the function $\ReLU'$ evaluated on its input (with the convention that $\ReLU'(0) = 0$). This defines a semialgebraic function $G \colon \RR^p \to \{0,1\}^{N}$. As it has values in a finite set, by semialgebraicity, there is an open subset of $S' \subset S$ such that $G$ is constant on $S$ \cite[Theorem 6.7]{coste2000introduction}. Consider $P_2$ which computation graph is the same as that of $P_1$ except that each absolute value node is replaced by multiplication by the corresponding $\ReLU'$ value (which is constant on $S'$). Then $Q = [P_1]  = [P_2]$ for all inputs in the open set $S'$. All computation nodes of programs on $\mathcal{D}$ are multivariate polynomials and two polynomials which agree on an open set are equal globally. This concludes the proof.
\end{proof}

\section{Proofs of Section \ref{sec:NPhardness}}
\label{sec:proofNPhardness}

We investigate in this section the hardness of finding a Clarke subgradient for programs defined on the elementary dictionary ${\mathcal D}_0=\{+,-,\ReLU\}$. We start with an equivalent representation of these programs as linear $\ReLU$ networks with skip connections and specific weight matrices. This equivalence preserve representation size up to polynomial factors. We will then prove a hardness result on such $\ReLU$ networks. This will provide proof arguments for Theorem \ref{th:npCompletePrograms} by the polynomial time equivalence of the two representation. We proceed similarly to prove Proposition \ref{prop:polySolvableAlgo}, using the equivalence with the two representations.

\subsection{Polynomial time equivalence with linear ReLU networks with skip connections}
\label{sec:polynomialTimeEquivalence}
Given a set of matrices $M_1 \in \{-1,0,1\}^{p_1 \times p}$, $M_2 \in \{-1,0,1\}^{p_2 \times p_1}$, \ldots $M_{L-1} \in \{-1,0,1\}^{p_{L-1} \times p_{L-2}}$, $M_L \in \{-1,0,1\}^{1 \times p_{L-1}}$ we consider the function $F\colon \RR^p \to \RR$,
\begin{align}
				F \colon x &\mapsto M_L \Phi_{L-1}( M_{L-1}\Phi_{L-2}(\ldots \Phi_1(M_1 x ) ) ) .
				\label{eq:linearReLUNetwork}
\end{align}
where $\Phi_i \colon \RR^{p_i} \to \RR^{p_i}$ are given functions which apply to each coordinate, an activation function which is either the identity or the $\ReLU$ function. There is an obvious notion of size for this representation, corresponding to the number of free parameters (matrix entries and coordinates on which $\ReLU$ or identity is applied), the size of the representation is $p_{L-1} + \sum_{i=1}^{L-1} p_i \times p_{i-1} + p_i$.

A function $F$ given in \eqref{eq:linearReLUNetwork} can be represented by a program on $\mathcal{D}_0$ of equivalent size, this correspond to a naive implementation. Similarly, any program $P \in \mathcal{P}(\mathcal{D}_0)$ on $p$ inputs and with a single output can be represented by a network as in \eqref{eq:linearReLUNetwork} which size is at most $18 \cost(P)^3$. Indeed, we may assume that $\cost(P) \geq p/2$ without loss of generality, otherwise, the program would not perform operations on some of the input variables and it could be simplified by removing variables which do not affect the output. Recall that $m$ in Algorithm \ref{alg:algof} is the memory footprint of $P$, in our case, it is $m=p+\cost(P)$, the number of inputs plus the total number of operations. Note that we have $m \leq 3 \cost(P)$.
Each operation $+$, $-$ or $\ReLU$ in the program can be represented by a $m \times m$ matrix composed with a certain $\Phi \colon \RR^m \to \RR^m$ which contribution to the Relu network size is at most $(m^2 + m) \leq 2m^2 \leq 18 \cost(P)^2$ since $m$ is integer and $m \leq 3 \cost(P)$. There are $\cost(P)$ such operations so that a program can be represented equivalently by linear Relu network, with $L = \cost(P)$  layers which contribution to the network size is at most $18 \cost(P)^2$ so that the size of the resulting network is at most $18 \cost(P)^3$, which is the desired bound since.

We have shown that working with functions represented as in equation \eqref{eq:linearReLUNetwork} is equivalent to work with programs in $\mathcal{P}(\mathcal{D}_0)$ as it is possible to switch from one to the other at a cost of an increase of the representation size which is only cubic. Therefore we will from now on work with functions represented as linear relu networks with skip connections as in \eqref{eq:linearReLUNetwork}, and NP-hardness or polynomial time results on such function will be valid on $\mathcal{P}(\mathcal{D}_0)$ by the construction above.

\subsection{Further properties of Linear $\ReLU$ networks}
Throughout this section $F$ denotes a with representation as in \eqref{eq:linearReLUNetwork}.
This function is positively homogeneous, it satisfies $F(0) = 0$ and it. By piecewise linearity, its Clarke subdifferential is a polyhedron (see e.g., \cite{arora2018understanding,raghu2017expressive}). The Clarke subdifferential is a conservative gradient for this function, and we will associate to it a different conservative gradient, associated to Algorithm \ref{alg:autodiff0}

\begin{definition}[Autodiff conservative gradient]
\label{def:autodiffConservative}{\rm
				We consider a specific conservative gradient for $F$, it is given by $D^a_F(x) = \{M_1^T D_1 M_2^TD_2 \ldots M_{L-1}^T D_{L-1} M_L^T\}$,
				where for $i = 1,\ldots, L-1$, $D_i$ is a diagonal matrix which entries respects the sign pattern of the corresponding activation function: $1$ if the activation is identity, $0$ if the activation is $\ReLU$ and the input is negative, $1$ if the input is positive and all elements in $[0,1]$ if the input is null.  We have in particular
				\begin{align}
								D^a_F(0) = \{M_1^T D_1 M_2^TD_2 \ldots M_{L-1}^T D_{L-1} M_L^T\}
								\label{eq:autodiffConservativeZero}
				\end{align}
				where in this case, diagonal entries of matrices $D_i$ corresponding to $\ReLU$ activations are arbitrary in $[0,1]$ and the remaining diagonal entries are $1$ (corresponding to identity activations).}
\end{definition}
The autodiff conservative gradient is associated with the algorithmic differentiation of a natural numerical program implementing $F$ as in Subsection \ref{def:algodifferentiation}. Furthermore, one can check that given a program $P \in \mathcal{P}(\mathcal{D}_0)$, after the transformation outlined in Section \ref{sec:polynomialTimeEquivalence}, we have that $D_F^\alpha$ coincides with $D_P$ in Theorem \ref{th:cheapconservative}.
In the following definition, $D_F$ could be,for example, the Clarke subdifferential of $F$ or the algorithmic differentiation conservative gradient $D^a_F$.

We consider the following problem.
\begin{problem}[Conservative gradient enumeration]
\label{prob:weakNonstationarity}{\rm
Given matrices $M_1 \in \RR^{p_1 \times p}$, $M_2 \in \RR^{p_2 \times p_1}$, \ldots $M_{L-1} \in \RR^{p_{L-1} \times p_{L-2}}$, $M_L \in \RR^{1 \times p_{L-1}}$, and functions $\Phi_1,\ldots, \Phi_{L-1}$, consider $F \colon \RR^p \to \RR$ the associated linear ReLU network with skip connections in \eqref{eq:linearReLUNetwork}, $x \in \RR^p$ and $D_F\colon \RR^p \rightrightarrows \RR^p$ a conservative gradient for $F$. Compute two distinct elements in $D_F(x)$ or one element if it is a singleton.}
\end{problem}
This problem enters the field of computational complexity as we have associated to it a representation size corresponding to the number of ``free parameters'' to be chosen: each matrix entry and the activation ($\ReLU$ or identity) corresponding to each coordinate, resulting in a number of parameters $p_{L-1} + \sum_{i=1}^{L-1} p_i \times p_{i-1} + p_i$. In what follows, we will consider integral or rational entries for matrices and input $x$ with the common notion of bit size. \cite{schrijver1998theory}.

\subsubsection{Clarke enumeration is NP-hard for ReLU networks}
The decision version of Problem \ref{prob:weakNonstationarity}, under the same assumptions, is to decide if there exists two distinct elements in $D_F(x)$, that is, decide if $D_F(x)$ is not reduced to a singleton.

\begin{theorem}[Finding two Clarke subgradients is NP-Hard]
				Decision version of problem \eqref{prob:weakNonstationarity} with matrix and vector entries in $\{-1,0,1\}$ and $D_F = \partialc F$ is NP-hard.	
				\label{th:npComplete}
\end{theorem}
\paragraph{Sketch of proof:} We encode a boolean formula $\pi$ on $p$ boolean variable, in a linear ReLU network with $p$ inputs, of size proportional to that of $\pi$. We do so by replacing "or" operations by maxima, "and" operations by minima, negation by multiplication by $-1$ and adding ReLU operations to the result. Using Lemma \ref{lem:maxReLUNetwork} in appendix \ref{sec:additionallemmas}, the resulting $F$ is represented by a linear ReLU network. By construction, $0$ is a global minimum of $F$ so $0 \in \partialc F(0)$, and $F$ takes positive values if and only if $\pi$ is satisfiable if and only if $\partialc F(0) \neq \{0\}$. We detail this proof in coming sections.

Theorem \ref{th:npComplete} illustrates the hardness enumerating Clarke subgradients of linear ReLU networks. 
For $F$ as in \eqref{eq:linearReLUNetwork} and $x \in \RR^p$, $\partialc F(x)$ is not a singleton if and only if $F$ is not differentiable at $x$, therefore:
\begin{corollary}[Deciding non-differentiability of a NN is NP-Hard]
				Given a linear ReLU network as in \eqref{eq:linearReLUNetwork} with matrices as in Theorem \ref{th:npComplete}  and $x \in \RR^p$,  deciding if $F$ is not differentiable at $x$ is NP-hard. 
				\label{cor:smoothnessHard}
\end{corollary}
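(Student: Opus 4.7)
The plan is a direct reduction from Theorem \ref{th:npComplete}. The key observation is that for a piecewise linear function $F$ (and every linear ReLU network with skip connections as in \eqref{eq:linearReLUNetwork} is piecewise linear), non-differentiability at a point is exactly equivalent to the Clarke subdifferential at that point not being a singleton. Hence the decision version of Problem \ref{prob:weakNonstationarity} for $D_F = \partial^c F$ reduces in polynomial time (in fact identically) to the problem of deciding non-differentiability, and Theorem \ref{th:npComplete} transports to Corollary \ref{cor:smoothnessHard}.

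More concretely, I would first recall that $F$ is semi-algebraic and locally Lipschitz, so the full-measure set $\diff_F$ of points of differentiability is the complement of a finite union of hyperplanes determined by the activation patterns of the ReLU nodes. On each top-dimensional linear region, $\nabla F$ is a constant vector. If $F$ is differentiable at $x$, then $x$ belongs to a single region and, combining with upper semicontinuity of $\partial^c F$ and the definition \eqref{eq:clarkedefinition}, we obtain $\partial^c F(x) = \{\nabla F(x)\}$. Conversely, if $F$ is not differentiable at $x$, then $x$ lies on the boundary of at least two full-dimensional regions with distinct constant gradients $g_1 \neq g_2$; both of these arise as limits of $\nabla F(x_k)$ along sequences $x_k \to x$ in $\diff_F$, so $\{g_1, g_2\} \subset \partial^c F(x)$, which is therefore not a singleton.

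This establishes the equivalence \emph{$F$ is not differentiable at $x$} $\iff$ \emph{$\partial^c F(x)$ is not a singleton}. The instance $(M_1, \ldots, M_L, \Phi_1, \ldots, \Phi_{L-1}, x)$ used in Theorem \ref{th:npComplete} is exactly an instance for the non-differentiability decision problem, with matrix and vector entries in $\{-1,0,1\}$, so no size blow-up occurs, and through the polynomial-time equivalence with $\mathcal{P}(\mathcal{D}_0)$ sketched in Section \ref{sec:polynomialTimeEquivalence}, the statement also transfers to ReLU programs as stated. The only mildly delicate step is justifying the ``singleton iff differentiable'' equivalence rigorously for the specific class at hand; once that is in place the reduction is immediate and the NP-hardness of Theorem \ref{th:npComplete} gives the corollary.
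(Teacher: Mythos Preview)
Your proposal is correct and follows exactly the paper's approach: the paper simply notes in one line that ``$\partialc F(x)$ is not a singleton if and only if $F$ is not differentiable at $x$'' and derives the corollary directly from Theorem~\ref{th:npComplete}. Your write-up in fact supplies more detail than the paper does for the singleton--differentiability equivalence in the piecewise linear setting, which is a welcome elaboration rather than a departure.
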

In the coming section, we will provide a proof for Theorem \ref{th:npComplete} and Corollary \ref{cor:smoothnessHard}. By the polynomial time equivalence of the representation of programs in $\mathcal{P}(\mathcal{D}_0)$ and functions as in \eqref{eq:linearReLUNetwork} detailed in Section \ref{sec:polynomialTimeEquivalence}, this proves Theorem \ref{th:npCompletePrograms}.

We add a remark on lexicographic subdifferential. It follows from \cite[Proposition 2.7]{barton2018computationally} that, for linear ReLU network $F$  as in \eqref{eq:linearReLUNetwork},  the lexicographic subdifferential \cite{nesterov2005lexicographic} is the set of neighboring gradients and is contained in Clarke subdifferential. 
\begin{corollary}[Finding two lexicographic subgradients is NP-Hard]
				Theorem \ref{th:npComplete} remains true if $D_F$ is the lexicographic subdifferential.
				\label{cor:lexicographic}
\end{corollary}

\subsection{Proof of the main hardness result}

\paragraph{Preliminary on 3-SAT}
We will use reduction to 3-SAT problem which is among the most well known NP-complete problems. Recall that a boolean formula is built from boolean variables, and operators: AND (conjunction, denoted $\land$) OR (disjunction, $\lor$) and NOT (negation, $\lnot$). A literal, is either a variable or the negation of a variable. A clause is a disjunction of literals (or a single literal). A formula is in conjunctive normal form (CNF), if it is a conjunction of clauses or a clause. 3-SAT is the decidability problem associated to CNF formulas with clauses containing $3$ literals, such formulas are called $3$-CNF formulas.  

\begin{example}
{\rm
				The formula $(b_1 \lor b_2 \lor \lnot b_3) \land (b_1 \lor  b_4 \lor \lnot b_5) \land (\lnot b_2 \lor \lnot b_3 \lor b_6)$ is 3-CNF with $6$ boolean variables $b_1,\ldots, b_6$ and $3$ clauses.}
\end{example}
\begin{problem}[3-SAT]
				Given $p,n \in \NN$ and a boolean function $\pi$ with $p$ boolean arguments $b_1,\ldots, b_p$ represented by a $3$-CNF formula with $n$ clauses, decide if there exists an assignment $(b_1,\ldots, b_p) \in \{0, 1\}^p$ such that $\pi(b_1,\ldots, b_p) = 1$. 
				\label{prob:3SAT}
\end{problem}

\begin{proof}[of Theorem \ref{th:npComplete}]
		
				The reduction is to $3$-SAT. 

				Consider a 3-CNF function $\pi$ in $p$ variables $b_1, \ldots, b_p$ with $n$ clauses of size 3. We may assume without loss of generality that $n$ is of the form $2^k$ for $k \in \NN$ by adding clauses which are always true and increasing the number of clauses by a factor at most $2$. We will consider $p$ real variables $x_1,\ldots,x_p$. Consider the first clause of $\pi$, say for example $(b_1 \lor b_2 \lor \lnot b_3)$. We associate to each literal the corresponding variable $x$ if the literal is equal to a variable, and $-x$ if it is the negation of the corresponding variable, for example $x_1,x_2,-x_3$. These are combined using $\ReLU \circ \max$ resulting in the expression $\ReLU(\max\{x_1,x_2,-x_3\})$. 
				
				We proceed similarly for each clause, we obtain $n = 2^k$ expressions involving $\ReLU \circ \max$ where the $\max$ is over three numbers. 
				The $\max$ of $3$ numbers is the same as the $\max$ of $4$ numbers (by copying one of the inputs) and, according to Lemma \ref{lem:maxReLUNetwork}, can be represented by a $\ReLU$ network with $2$ $\ReLU$ layers of size at most $3 \times 2 = 6$ with weight matrices in $\{-1,0,1\}$. 
				
				We may therefore represent the $n$ $\ReLU \circ \max$ expressions with a network with $p$ inputs and $n$ outputs, with 3 $\ReLU$ layers (2 for each $\max$ and one for the outer $\ReLU$) of size at most $6n$ (6 nodes for each $\max$) involving matrices with entries in $\{-1,0,1\}$. These expressions are combined using the operator $\min$ applied to the $n=2^k$ clause. Thanks to Lemma \ref{lem:maxReLUNetwork} again, using $\min\{a,b\} = - \max\{-a,-b\}$, the max over the $2^k$ numbers can be expressed with $k$ layers of size at most $3 \times 2^{k-1} = \frac{3}{2}n$

				We call the resulting network $F$. It has a representation as in \eqref{eq:linearReLUNetwork}, with matrices with entries in  $Z_3 = \{-1,0,1\}$ as in Problem \ref{prob:weakNonstationarity}. It contains $\log_2(n) + 3$ $\ReLU$ layers of size at most $6n$ and it has therefore a description which size is polynomially bounded in $n$ which is proportional to the bit size representation of the 3-CNF formula $\pi$. 
			
				\begin{example}
				{\rm
				If the 3-CNF formula is given by $(b_1 \lor b_2 \lor \lnot b_3) \land (b_1 \lor  b_4 \lor \lnot b_5) \land (\lnot b_2 \lor \lnot b_3 \lor b_6) \land (b_2 \lor \lnot b_2 \lor b_6)$ with $p=6$ boolean variables and $n = 4$ clauses, we will obtain a network computing the following expression in $6$ real variables $x_1,\ldots, x_6$: 
				\begin{align*}
								&F(x_1,\ldots, x_6) \\
				= \;&\min(\ReLU(\max(x_1, x_2, -x_3)), \ReLU(\max(x_1, x_4, -x_5)), \\
				&\quad\quad\ReLU(\max(-x_2, -x_3, x_6)), \ReLU(\max(x_2,-x_2,x_6)))).
				\end{align*}}
				\end{example}
				We have the following rules for $\min$ and $\max$ over real numbers $a,b,c$ (we use the convention $\sign(0) = 0$).
				\begin{itemize}
								\item $\max(a,b,c) > 0 \qquad \Leftrightarrow \qquad (a>0) \lor (b>0) \lor (c>0)$.
								\item $\max(a,b,c) > 0 \qquad \Leftrightarrow \qquad \max(\sign(a), \sign(b), \sign(c)) > 0$.
								\item $\min(a,b,c) > 0 \qquad \Leftrightarrow \qquad (a>0) \land (b>0) \land (c>0)$.
								\item $\min(a,b,c) > 0 \qquad \Leftrightarrow \qquad \min(\sign(a), \sign(b), \sign(c)) > 0$.
								\item $a>0 \qquad \Leftrightarrow \qquad (-a < 0) \qquad \Leftrightarrow \qquad \sign(a) > 0$.
								\item $\ReLU(\max(\sign(a), \sign(b), \sign(c))) \in \{0,1 \}$.
				\end{itemize}
				Because of the $\min \circ \ReLU$ structure, we have $F(x) \geq 0$ for all $x$, furthermore, $F(0) = 0$, so that $0$ is a global minimum of $F$ and $0 \in \partialc F(0)$. For any $x$, we have $F(x) > 0$ if and only if the output of each $\max$ is positive, if and only if each $\max$ clause contains a positive argument. We therefore have that $F(x) > 0$ if and only if $F(\sign(x)) >0$ where $\sign$ is the coordinatewise application of the $\sign$, taking value $0$ at $0$.
				
				We have the following chain of equivalence
				\begin{align*}
								& \partialc F(0) \neq \{0\}\\
								\Leftrightarrow \qquad&\exists x \in \RR^p,\quad  F(x) \neq 0 \\
								\Leftrightarrow \qquad&\exists x \in \RR^p,\quad  F(x) > 0 \\
								\Leftrightarrow \qquad&\exists x \in \RR^p , \quad x_i \neq 0 \,(\forall i=1, \ldots, p) \quad F(x) > 0\\
								\Leftrightarrow \qquad&\exists x \in \RR^p , \quad x_i \neq 0 \,(\forall i=1, \ldots, p) \quad F(\sign(x)) > 0\\
								\Leftrightarrow \qquad&\exists x \in \{-1,1\}^p , F(x) > 0\\
								\Leftrightarrow \qquad&\exists x \in \{-1,1\}^p,\quad \pi(b) = 1  , \quad b_i = \mathbb{I}(x_i = 1 ) \quad(i = 1 \ldots p),
				\end{align*}
				where $\mathbb{I}$ outputs $1$ if the boolean argument is true, and $0$ otherwise.
				The first equivalence is by Lemma \ref{lem:weakStationarity}, the second is because $F\geq 0$, the third is because $F$ is continuous, the fourth is by the discussion above and the fifth is obvious because all possible $\{-1,1\}$ patterns can be described as coordinatewise sign applied vectors in $\RR^p$ with nonzero entries. For the last equivalence, for $x_i \in \{-1,1\}$ we set $b_i = 0$ if $x_i =-1$ and $b_i = 1$ if $x_i = 1$. Each $\ReLU \circ \max$ applied to the sign vector corresponds to a clause and its output is in $\{0,1\}$. The output of each $\ReLU \circ \max$ clause is $1$ if and only if at least one of its argument is $1$, if and only if one of the litteral of the corresponding disjunction is $1$ if and only if the disjunction applied to the corresponding boolean variables is true. Otherwise, it is $0$. Similarly, the $\min$ combination has positive output if and only if all $\max$ outputs are $1$ if and only if all the disjunctions applied to variables $b_i$ are true.

				This shows that Problem \ref{prob:weakNonstationarity} is NP-hard, because $0 \in \partialc F(0)$ and $\partialc F(0) \neq \{0\}$ if and only if there exists two distinct elements in $\partialc F(0)$.
\end{proof}

\subsection{Proof of feasibility for autodiff conservative gradient}
\label{sec:proofThNPHardConservativeFeasible}
The counterpart of Problem \ref{prob:weakNonstationarity} for AD conservative gradient in Definition \ref{def:autodiffConservative} is tractable, illustrating a major computational difference between Clarke subdifferential and AD conservative gradient.
The proof is in Section \ref{sec:proofThNPHardConservativeFeasible}, by reduction to a graph shortest path problem. By the polynomial time equivalence between linear $\ReLU$ network and programs on $\{+,-,\ReLU\}$ proved in Section \ref{sec:polynomialTimeEquivalence}, this proves Proposition \ref{prop:polySolvableAlgo}.
\begin{proposition}
				Problem \eqref{prob:weakNonstationarity} with matrix entries in $\QQ$ and $D_F = D^a_F$ is polynomial time solvable.	
				\label{prop:polySolvable}
\end{proposition}

\begin{proof}[of Proposition \ref{prop:polySolvable}]
				Consider the following polynomial expression:
				\begin{align}
								M_1^T(\bar{Q}_1 + Q_1)  \ldots M_{L-1}^T (\bar{Q}_{L-1} + Q_{L-1}) M_L^T,							\label{eq:autodiffConservativeZeroTemp}
				\end{align}
				where we decomposed $D_i = \bar{Q}_i + Q_i$ in Definition \ref{def:autodiffConservative}, such that $\bar{Q}_i$ is constant, diagonal, with zero entries except for the $1$ entries which are enforced by the network activation and sign pattern: strictly positive activation before application of $\ReLU$ when network is evaluated at $x$, or identity activations. Furthermore, $Q_i$ contains $q_i \leq p_i$ diagonal variables to be chosen in $[0,1]$ corresponding to the zero activation pattern before application of $\ReLU$, for $i = 1,\ldots, L-1$. The strictly negative values before application of $\ReLU$ do not play an additional role, they correspond diagonal entries constrained to $0$ in both $\bar{Q}_i$ and $Q_i$, $i = 1,\ldots, L-1$. Note that a polynomial is constant on a box if and only if it is constant so the polynomial expression in \eqref{eq:autodiffConservativeZeroTemp} is constant when diagonal entries are constrained in $[0,1]$, if and only if it is constant. So the problem reduces to decide if the polynomial expression in \eqref{eq:autodiffConservativeZeroTemp} is non constant, with respect to variables $Q_1,\ldots, Q_{L-1}$. We show that this reduces to a graph connectivity problem over $2 + \sum_{i=1}^{l-1}q_i$ vertices and edge weight given by partial products in \eqref{eq:autodiffConservativeZeroTemp}.
				
				First, the problem can be reduced to finding a non-zero value in the expression in \eqref{eq:autodiffConservativeZeroTemp}.
				Indeed, one can substract the value obtained choosing $Q_i = 0$, $i=1,\ldots, L-1$ and use the following block representation:
				\begin{align}
								&\begin{pmatrix}
								    M_1^T & -M_1^T
								\end{pmatrix}
								\begin{pmatrix}
								    \bar{Q}_1 + Q_1 & 0\\
								    0 & \bar{Q}_1
								\end{pmatrix}
								  \ldots 
								\begin{pmatrix}
								    M_{L-1}^T & 0\\
								    0 & M_{L-1}^T
								\end{pmatrix}						\begin{pmatrix}
								    \bar{Q}_{L-1} + Q_{L-1} & 0\\
								    0 & \bar{Q}_{L-1}
								\end{pmatrix}
								 \begin{pmatrix}
								    M_L^T \\
								    M_L^T
								\end{pmatrix}\nonumber\\
							    =\;& 	M_1^T(\bar{Q}_1 + Q_1)  \ldots M_{L-1}^T (\bar{Q}_{L-1} + Q_{L-1}) M_L^T \quad-\quad M_1^T\bar{Q}_1  \ldots M_{L-1}^T \bar{Q}_{L-1} M_L^T.	\label{eq:autodiffConservativeZeroTemp1}
				\end{align}
                Therefore, expression \eqref{eq:autodiffConservativeZeroTemp} is nonconstant if and only if expression in \eqref{eq:autodiffConservativeZeroTemp1} takes a nonzero value for some assignment of $Q_1,\ldots, Q_{L-1}$. The number of variables in \eqref{eq:autodiffConservativeZeroTemp} and \eqref{eq:autodiffConservativeZeroTemp1} is the same and they have exactly the same form. Therefore we assume without loss of generality that the problem is to decide if the polynomial expression in \eqref{eq:autodiffConservativeZeroTemp} is not equal to the null polynomial.
                
				Expression \eqref{eq:autodiffConservativeZeroTemp} is a vector function each of its coordinates being a polynomial function. It is not uniformly null if and only if and only if there exists a coordinate which is not the null polynomial, so we may add a diagonal matrix $Q_0$ with $p_0 = p$ diagonal entries in $[0,1]$ (and $\bar{Q}_0 = 0$ for the sake of symmetry) and $M_0 \in \RR^{p\times 1}$ the vector of all ones and find a nonzero value for the product
				\begin{align}
								M_0^T (\bar{Q}_0 + Q_0) M_1^T(\bar{Q}_1 + Q_1)  \ldots M_{L-1}^T (\bar{Q}_{L-1} + Q_{L-1}) M_L^T,
								\label{eq:autodiffConservativeZeroTemp2}
				\end{align}
                Expression \eqref{eq:autodiffConservativeZeroTemp2} is now real valued and therefore defines a polynomial.
				For each $0 = 1 \ldots L-1$, denote by $d_i \in [0,1]^{q_i}$, the vector containing the diagonal entries of matrix $Q_i$, this corresponds exactly to the variable diagonal elements of $D_i$ in Definition \ref{def:autodiffConservative}. Denote by $P(d_0,\ldots,d_L)$ the obtained polynomial, $P$ is multilinear in $d_0,\ldots, d_{L-1}$, that is, it has an affine dependency for one block vector if the others are fixed. Therefore the hessian of $P$ has zero diagonal blocks and the function is harmonic (hessian has zero trace), as a consequence, the maximum principle for harmonic functions entails that its maximum and minimum on any polytope are attained at vertices. 
				
				For $i = 0, \ldots, L-1$ denote by $\Delta_i \subset \RR^{q_i}$, the convex hull of the origin and the canonical basis vectors, this is a $q_i$ dimensional simplex with nonempty interior.
				The polynomial $P$ in \eqref{eq:autodiffConservativeZeroTemp2} is identically zero if and only if it vanishes on the product of simplices $\Delta_0 \times \ldots \times \Delta_{L-1}$ (which has non empty interior), if and only if it vanishes on the product set of the edges of these simplices by the maximum principle. In other words, $P$ is not identically zero, if and only if it contains a nonzero element when each $d_i$ is restricted to be an element of the canonical basis (zero vector with exactly one nonzero entry) or the null vector.

				Define a graph with a layer structure: 
				\begin{itemize}
				    \item The source layer $V_{-1}$ contains a single source node, $v_{-1,1}$.
				    \item The zero-th layer $V_0$ contains $q_0=p$ nodes $v_{0,1} \ldots v_{0,q_0}$.
				    \item Recursively, the $i$-th layer $V_i$ contains $q_i$ nodes $v_{i,1} \ldots v_{i,q_i}$, for $i = 1 \ldots L-1$.
				    \item The sink layer $V_L$ contains a single node node $v_{L,1}$.
				\end{itemize} 
				We connect nodes between consecutive layers, respecting the order induced by the layer structure. For $i = -1,\ldots L-1$ and $j = 0,\ldots, L$, with $j>i$, we connect layers $V_i$ and $V_j$ as follows
				\begin{itemize}
								\item Compute the quantity
								\begin{align*}
								    R = \left(\prod_{m=i+1}^{j-1} M_m^T \bar{Q}_m\right) \times M_j^T,
								\end{align*}
								where if $j=i+1$ the product reduces to the identity ($R = M_j^T$).
								\item For $k = 1,\ldots, q_i$ and $l = 1,\ldots, q_j$, add an edge with between $v_{i,k}$ and $v_{j,l}$ if $R_{k,l} \neq 0$.
				\end{itemize}
				
				The resulting graph has a number of nodes equal to the number of $\ReLU$ functions in $F$ plus $p$ additional nodes and the source and sink nodes. Computation of edges can be done in polynomial time: it requires at most $4(L+1)^2$ matrix products involving at most $2L +1$ matrices. Indeed the product of $m$ matrices has polynomial time complexity in the representation bit size of the $m$ input matrices.
				
				In this graph, a directed path from the source to the sink visits each layer at most once, and in that case it visits a single node. Each such path corresponds to a monomial with nonzero coefficient appearing in the polynomial $P$ in \eqref{eq:autodiffConservativeZeroTemp2} by construction of the graph structure. Conversely each nonzero coefficient of a given monomial in \eqref{eq:autodiffConservativeZeroTemp2} is uniquely associated to a path in the graph which corresponds to the nodes associated to variables in the monomial. Therefore, the source is connected to the sink if and only if there is a nonzero monomial in \eqref{eq:autodiffConservativeZeroTemp2}, if and only if the corresponding polynomial is nonzero. Furthermore, each path corresponds to the evaluation of the program at an edge of the product $\Delta_0\times \ldots \times \Delta_{L-1}$. Therefore finding a path connecting the source to the sink allows to compute a nonzero element in the product and if no such path exists, the polynomial is identically zero.

				So we have shown that the truth value of problem \ref{prob:weakNonstationarity} with $D_F = D_F^a$, is the same as the source being connected to the sink by a directed path in the graph we defined, which has size polynomialy bounded compared to network size. Connectivity can be solved, for example using Dijkstra's algorithm, in time $O(|V|^2)$ where $|V|$ is the number of nodes (or vertices). A path represents a nonzero element of $D_f(0)$ and if no such path exists, we conclude that $D_F(0 ) = \{0\}$. This shows that the problem is solvable in polynomial time and concludes the proof.

\end{proof}

\subsection{Additional lemmas}
\label{sec:additionallemmas}
The following lemma provides a characterization of singleton subgradient for linear ReLU networks.
\begin{lemma}
				Let $F$ be a linear ReLU network, then $\partialc F(0) = \{0\}$ if and only if $F$ is constant.
				\label{lem:weakStationarity}
\end{lemma}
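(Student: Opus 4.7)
The plan is to exploit two structural properties enjoyed by every function described by (10): it is continuous piecewise linear, and since each $\Phi_i$ acts coordinate-wise as either the identity or $\ReLU$ (both positively homogeneous of degree one) and the $M_i$ are linear, $F$ itself is positively homogeneous of degree one with $F(0)=0$. The easy direction is immediate: if $F$ is constant then $F\equiv F(0)=0$, $F$ is differentiable everywhere with null gradient, and (1) gives $\partialc F(0)=\{0\}$.

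For the nontrivial direction, I would first make the piecewise linear structure explicit. Fixing any sign pattern across all $\ReLU$ inputs of all layers replaces each $\ReLU$ either by the identity or by the zero map in a manner consistent with the pattern; the resulting composition is linear in $x$, and the locus of inputs realizing that sign pattern is an intersection of homogeneous half-spaces, hence a polyhedral cone with apex at $0$. This yields a finite decomposition $\mathbb{R}^p=\bigcup_\alpha C_\alpha$ into polyhedral cones on each of which $F$ agrees with a linear function $x\mapsto\langle v_\alpha,x\rangle$ for some $v_\alpha\in\mathbb{R}^p$.

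Next I would argue that every $v_\alpha$ associated with a full-dimensional $C_\alpha$ belongs to $\partialc F(0)$. Indeed, $F$ is differentiable on $\mathrm{int}\,C_\alpha$ with constant gradient $v_\alpha$, so for any sequence $x_k\in\mathrm{int}\,C_\alpha$ with $x_k\to 0$ the definition (1) yields $v_\alpha=\lim_k\nabla F(x_k)\in\partialc F(0)$. Assuming $\partialc F(0)=\{0\}$ then forces $v_\alpha=0$ for every full-dimensional piece. Since these full-dimensional cones cover $\mathbb{R}^p$ (lower-dimensional cones lie in their common boundary and $F$ is continuous), $F\equiv 0$ everywhere, so $F$ is constant.

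The main point requiring care, though not a genuine difficulty, is the recursive construction of the conical decomposition: the sign pattern at layer $i+1$ is defined only relative to the sign patterns at layers $1,\ldots,i$, so the homogeneity of the defining half-spaces has to be propagated layer by layer. Once this bookkeeping is in place, the conclusion is a one-line consequence of definition (1) together with the positive $1$-homogeneity of $F$.
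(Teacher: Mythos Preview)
Your proof is correct and follows essentially the same route as the paper: both exploit piecewise linearity together with positive $1$-homogeneity to conclude that every gradient of $F$ (on the full-dimensional pieces, equivalently almost everywhere) appears in $\partialc F(0)$ and hence vanishes, so $F\equiv 0$ by continuity. The only cosmetic difference is that you make the conical decomposition explicit via activation sign patterns, whereas the paper phrases the same idea through the scaling invariance $\partialc F(\lambda x)=\partialc F(x)$ for $\lambda>0$.
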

\begin{proof}
If $F$ is constant, the result is immediate because $F \equiv 0$. Now, suppose that $\partialc F(0) = \{0\}$. We know that $F$ is piecewise linear and there exists a finite set of polyhedron whose union is $\RR^{p}$, where $F$ is affine linear over each polyhedron. 
Furthermore, $F$ is positively homogeneous, therefore for each $x \in \RR^{p}, \partialc F(x) = \partialc F(\lambda x)$ with $\lambda > 0$.
Setting $R \subset \RR^{p}$, the full measure set where $F$ is differentiable, one has that for all $x \in \RR^{p}$ and 
\begin{align*}
\partialc F(x) = \mathrm{conv} \left\{ v \in \RR^p,\, \exists  y_k \underset{k \to \infty}{\to} 0 \text{ with } y_k \in R,\, v_k = \nabla F(y_k) \underset{k \to \infty}{\to} v \right\} = \{0\}.
\end{align*}
Therefore, each affine part has zero derivative on each polyhedra and by continuity we conclude that $F$ is constant.
\end{proof}

The next lemma describes an explicit representation of maximum of finitely many numbers using a ReLU network with weights in $\{-1,0,1\}$.
\begin{lemma}
				Given $k \in \NN$, $k>0$, there exists $F$, a $\ReLU$ network with $k$ $\ReLU$ layers of size at most $3 \times 2^{k-1}$ and weight matrices with entries in $\{-1,0,1\}$, with $p = 2^k$ inputs such that for any $x \in \RR^p$,
				\begin{align*}
								F(x) = \max_{i=1,\ldots, 2^k} x_i.
				\end{align*}
				\label{lem:maxReLUNetwork}
\end{lemma}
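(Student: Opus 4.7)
My plan is to proceed by induction on $k$, using the elementary identity
\[
\max(a,b) \;=\; \ReLU(a-b) + \ReLU(b) - \ReLU(-b),
\]
which is valid because $\max(a,b)=\ReLU(a-b)+b$ and $b=\ReLU(b)-\ReLU(-b)$. The right-hand side expresses $\max(a,b)$ as a linear combination with coefficients in $\{-1,0,1\}$ of three $\ReLU$ units whose pre-activations are themselves integer linear combinations of $(a,b)$ with coefficients in $\{-1,0,1\}$. This settles the base case $k=1$: two inputs, one $\ReLU$ layer of size $3 = 3\cdot 2^{0}$, and all weight matrices have entries in $\{-1,0,1\}$.

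For the induction step, suppose the lemma holds at level $k-1$, and take $x \in \RR^{2^k}$. Split $x$ into $2^{k-1}$ adjacent pairs and apply the base construction in parallel to each pair: a first linear map $M_1$ of size $(3\cdot 2^{k-1}) \times 2^k$ with entries in $\{-1,0,1\}$ produces, for each pair $(x_{2j-1},x_{2j})$, the three pre-activations $x_{2j-1}-x_{2j},\, x_{2j},\, -x_{2j}$. After applying the first $\ReLU$ layer, of size exactly $3\cdot 2^{k-1}$, any linear combination with $\{-1,0,1\}$ coefficients over the three outputs of block $j$ recovers $m_j := \max(x_{2j-1},x_{2j})$. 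By the inductive hypothesis, there is a $\ReLU$ network $G$ with $k-1$ $\ReLU$ layers of sizes at most $3\cdot 2^{k-2}$ and $\{-1,0,1\}$-weights that computes $\max_{j=1,\dots,2^{k-1}} m_j$ from the inputs $(m_1,\dots,m_{2^{k-1}})$. Since the input layer of $G$ is a linear map $M_1^G$ with $\{-1,0,1\}$ entries, and each $m_j$ is already a $\{-1,0,1\}$-combination of the post-$\ReLU$ activations from the first block, we can fold $M_1^G$ into the linear map immediately following the first $\ReLU$ layer of our network without enlarging the entry set. Concatenating yields $k$ $\ReLU$ layers of sizes $3\cdot 2^{k-1}, 3\cdot 2^{k-2},\dots,3$, each bounded by $3\cdot 2^{k-1}$, with all weight matrices in $\{-1,0,1\}$.

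The only subtle point, and the only potential obstacle, is the folding step: I must check that composing the ``reconstruct $m_j$'' linear map (a $2^{k-1}\times (3\cdot 2^{k-1})$ block-diagonal matrix with blocks $(1,1,-1)$) with the first linear map $M_1^G$ of the subnetwork still yields $\{-1,0,1\}$ entries. This is immediate because the block-diagonal structure means each output coordinate picks exactly one block, so the product of the two matrices has entries that are simply $\{-1,0,1\}$ entries of $M_1^G$ times $\{-1,1\}$ entries of the block rows. No entry expansion occurs, and the inductive construction goes through; correctness of the output follows from the base identity applied repeatedly along the binary tree of pairwise maxima, yielding $F(x)=\max_{i=1,\dots,2^k} x_i$.
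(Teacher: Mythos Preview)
Your proof is correct and follows essentially the same approach as the paper: induction on $k$ using the identity $\max(a,b)=\ReLU(a-b)+\ReLU(b)-\ReLU(-b)$, with a block-diagonal structure guaranteeing that the folded weight matrix stays in $\{-1,0,1\}$. The only difference is the direction of the recursion: you add the pairwise-max layer at the \emph{front} (compute $2^{k-1}$ pairwise maxima, then apply the $(k-1)$-layer network), whereas the paper stacks two copies of the $k$-layer network in parallel and applies the pairwise-max gadget at the \emph{end}; in both cases the crucial folding step works because the block-diagonal factor has at most one nonzero entry per column, so the composed matrix has entries that are single products of $\{-1,0,1\}$ values.
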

\begin{proof}
				We proceed by recursion on $k$. 
				Note that for any $x_1,x_2 \in \RR$
				\begin{align*}
								\max\{x_1,x_2\} &= \ReLU(x_1-x_2) + x_2 = \ReLU(x_1-x_2) + \ReLU(x_2) - \ReLU(-x_2).
				\end{align*}
				Set the matrices
				\begin{align*}
								A = 
								\begin{pmatrix}
												1 & -1\\
												0 & 1\\
												0& - 1
								\end{pmatrix} \qquad
								B = 
								\begin{pmatrix}
											1&1&-1
								\end{pmatrix}.
				\end{align*}
				The function $F_1 \colon \RR^2 \to \RR$ given by 
				\begin{align*}
								F_1(x) = B\ReLU(Ax)
				\end{align*}
				satisfies $F_1(x) = \max\{x_1,x_2\}$. This proves the result for $k = 1$. 

				Now assume that for $k \geq 1$, we have a network with $k$ $\ReLU$ layers of size at most $3 \times 2^k$ represented by matrices $M_1, \ldots, M_{k+1}$ with entries in $\{-1,0,1\}$, such that the corresponding $\ReLU$ network, as in \eqref{eq:linearReLUNetwork} $F_k \colon \RR^{2^k} \to \RR$ satisfies for all $x \in \RR^{2^k}$,
				\begin{align*}
								F_k(x) = \max_{i=1,\ldots, 2^k} x_i.
				\end{align*}
				Set $\tilde{F}_k$ the concatenation of two copies of $F_k$, that is $\tilde{F}_k \colon \RR^{2^{k+1}} \to \RR^2$, such that for all $x,y \in \RR^{2k}$, 
				\begin{align*}
								\tilde{F}_k(x,y) = 
								\begin{pmatrix}
												\max_{i=1,\ldots,2^k} x_i\\
												\max_{i=1,\ldots,2^k} y_i
								\end{pmatrix}.
				\end{align*}
				The matrices representing $\tilde{F}_k$ can be described in block form
				\begin{align*}
								\tilde{M_i} = 
								\begin{pmatrix}
												M_i& 0\\
												0 & M_i
								\end{pmatrix}
								\in \RR^{(2 p_i) \times (2 p_{i-1})}
				\end{align*}
				for $i = 1,\ldots, k+1$, where $p_0 = 2^k$ and $p_k = 1$. This network is made of $k$ layers of size at most $3 \times 2^{k+1}$, it has $2^{k+1}$ inputs and two outputs and its weight matrices have elements in $\{-1,0,1\}$.
				The block representation of the last matrix of this network is of the form
				\begin{align*}
								\begin{pmatrix}
												M_{k+1}& 0\\
												0& M_{k+1}
								\end{pmatrix}
								\in \RR^{2 \times l}
				\end{align*}
				where $l$ is the size of the row vector $M_{k+1}$. We have
				\begin{align*}
								&A \times \tilde{M_{k+1}} \\
								=\quad &
								\begin{pmatrix}
												1 & -1\\
												0 & 1\\
												0& - 1
								\end{pmatrix} \times
								\begin{pmatrix}
												M_{k+1}& 0\\
												0& M_{k+1}
								\end{pmatrix} = 
								\begin{pmatrix}
												M_{k+1}& -M_{k+1}\\
												0& M_{k+1}\\
												0& -M_{k+1}
								\end{pmatrix} \in \RR^{3 \times (2l)}. 
				\end{align*}
				We set $F_{k+1}(x,y) = F_1(F_k(x), F_k(y)) = F_1(\tilde{F}_k(x,y))$ for all $x,y \in \RR^{2k}$. In matrix notation we have 
				\begin{align*}
								F_{k+1}(x,y) = B \ReLU(A\tilde{F}_k(x,y)).
				\end{align*}
				The involved matrices are $M_{k+2} = B$, $A \times \tilde{M}_{k+1}$ and $\tilde{M}_{k} \ldots \tilde{M}_1$. They all have entries in $\{-1,0,1\}$ and the corresponding network has layers of size at most $3 \times 2^{k+1}$. The result then holds by recursion.
\end{proof}

\end{document}